\documentclass[12pt,twoside, reqno]{amsart}
\textheight=8.5truein \textwidth=6truein \oddsidemargin=0.5cm
\evensidemargin=0.5cm \setlength{\topmargin}{-.18in}
\usepackage{amsmath}
\usepackage{amssymb}
\usepackage{graphicx}
\usepackage{mathrsfs}
\usepackage{pb-diagram}

\def\stretchx{\Bumpeq{\!\!\!\!\!\!\!\!{\longrightarrow}}}

\newtheorem{theorem}{Theorem}[section]
\newtheorem{lemma}{Lemma}[section]
\newtheorem{definition}{Definition}[section]

\numberwithin{equation}{section}

\DeclareMathOperator{\dist}{dist}
\DeclareMathOperator{\rot}{rot}
\DeclareMathOperator{\Id}{Id}

\begin{document}

\title[Chaotic dynamics]
{Chaotic dynamics in the Volterra predator$-$prey model via linked
twist maps} {
\thanks{{This work has been supported by the MIUR project
``Equazioni Differenziali Ordinarie e Applicazioni".}}
\author[M. Pireddu, F. Zanolin]
{Marina Pireddu and  Fabio Zanolin  }
\address{
{M. Pireddu}, Department of Mathematics
and Computer Science, University of Udine, via delle Scienze 206,
I--33100 Udine, Italy
} \email{marina.pireddu@dimi.uniud.it}
\address{{F. Zanolin}, Department of Mathematics
and Computer Science, University of Udine, via delle Scienze 206,
I--33100 Udine, Italy}
\email{fabio.zanolin@dimi.uniud.it}

\maketitle

\begin{center}
\textit{``Dedicated to the memory of Professor Andrzej Lasota'' }
\end{center}

\noindent
\begin{abstract}
\noindent We prove the existence of infinitely many periodic solutions and
complicated dynamics, due to the presence of a topological horseshoe,
for the classical Volterra predator--prey model with a periodic harvesting.
The proof relies on some recent results about chaotic planar maps
combined with the study of geometric features which are typical
of linked twist maps.
\end{abstract}

\bigskip

\noindent {{\small {\textbf{Keywords}}~: Volterra predator--prey system, harvesting,
periodic solutions, subharmonics, chaotic--like dynamics, topological horseshoes, linked twist maps. }}

\smallskip
\noindent {{\small {\textbf{Mathematics  Subject Classification}}~: {34C25},
{37E40}, {92C20}. } }

\bigskip

\section{Introduction and main results}\label{sec-1}
The classical Volterra predator--prey model concerns the first order planar differential system
\begin{equation*}
\left\{
\begin{array}{ll}
x'= x( a - by)\\
y'= y(-c + dx),\\
\end{array}
\right.
\eqno{(E_0)}
\end{equation*}
where
$a, \;b, \;c, \;d \, > 0$
are constant coefficients. The study of system $(E_0)$ is confined to the open first quadrant
$({\mathbb R}^+_0)^2$
of the plane, since
$x(t) > 0$ and $y(t) > 0$ represent the size (number of individuals or density) of the
prey and the predator populations, respectively.
Such model was proposed by Vito Volterra in 1926 answering D'Ancona's
question about the percentage of selachians and food fish caught in the northern Adriatic
Sea during a range of years covering the period of the World War I
(see \cite{Br-93,Ma-04} for a more detailed historical account).
\\
System $(E_0)$ is conservative and its phase--portrait is that of a global center at the
point
\begin{equation}\label{eq-p0}
P_0\,:= \left( \frac{c}{d},\frac{a}{b}\right),
\end{equation}
surrounded by periodic orbits (run in the counterclockwise sense) which are the level lines of the first integral
\begin{equation}\label{eq-e0}
{\mathcal E}_0(x,y):= d x - c \log x + b y - a \log y,
\end{equation}
that we'll call ``energy'' in analogy to mechanical systems.
The choice of the sign in the definition of the first integral implies that
${\mathcal E}_0(x,y)$ achieves a strict absolute minimum at the point $P_0\,.$
\\
According to Volterra's analysis of $(E_0),$ the average of a periodic solution
$(x(t),y(t)),$ evaluated over a time--interval corresponding to its natural period,
coincides with the coordinates of the point $P_0\,.$

In order to include the effects of fishing in the model,
one can suppose that, during the harvesting time, both the prey and
the predator populations are
reduced at a rate proportional to the size of the population itself. This
assumption leads to the new system
\begin{equation*}
\left\{
\begin{array}{ll}
x'= x( a_{\mu} - by)\\
y'= y(-c_{\mu} + dx),\\
\end{array}
\right.
\eqno{(E_{\mu})}
\end{equation*}
where
$$a_{\mu}:= a - \mu\quad\mbox{and } \; c_{\mu}:= c + \mu$$
are the modified growth coefficients which take into account the fishing rates $-\mu x(t)$ and
$- \mu y(t),$ respectively. The parameter $\mu$ is assumed to be positive but small enough
$(\mu < a$) in order to prevent the extinction of the populations. System $(E_{\mu})$
has the same form like $(E_0)$ and therefore its phase--portrait is that of a global center
at
\begin{equation}\label{eq-pmu}
P_{\mu}:= \left( \frac{c + \mu}{d},\frac{a - \mu}{b}\right).
\end{equation}
The periodic orbits surrounding $P_{\mu}$ are the level lines of the first integral
\begin{equation}\label{eq-emu}
{\mathcal E}_{\mu}(x,y):= d x - c_{\mu} \log x + b y - a_{\mu} \log y.
\end{equation}
The coordinates of $P_{\mu}$ coincide with the average values of the prey and the predator populations
under the effect of fishing (see Figure 1). A comparison between the coordinates
of $P_0$ and $P_{\mu}$  motivates the
conclusion (\textit{Volterra's principle})
that a moderate harvesting has a favorable effect for the prey population
\cite{Br-93}.

\begin{figure}[ht]\label{fig-1}
\centering
\includegraphics[scale=0.28]{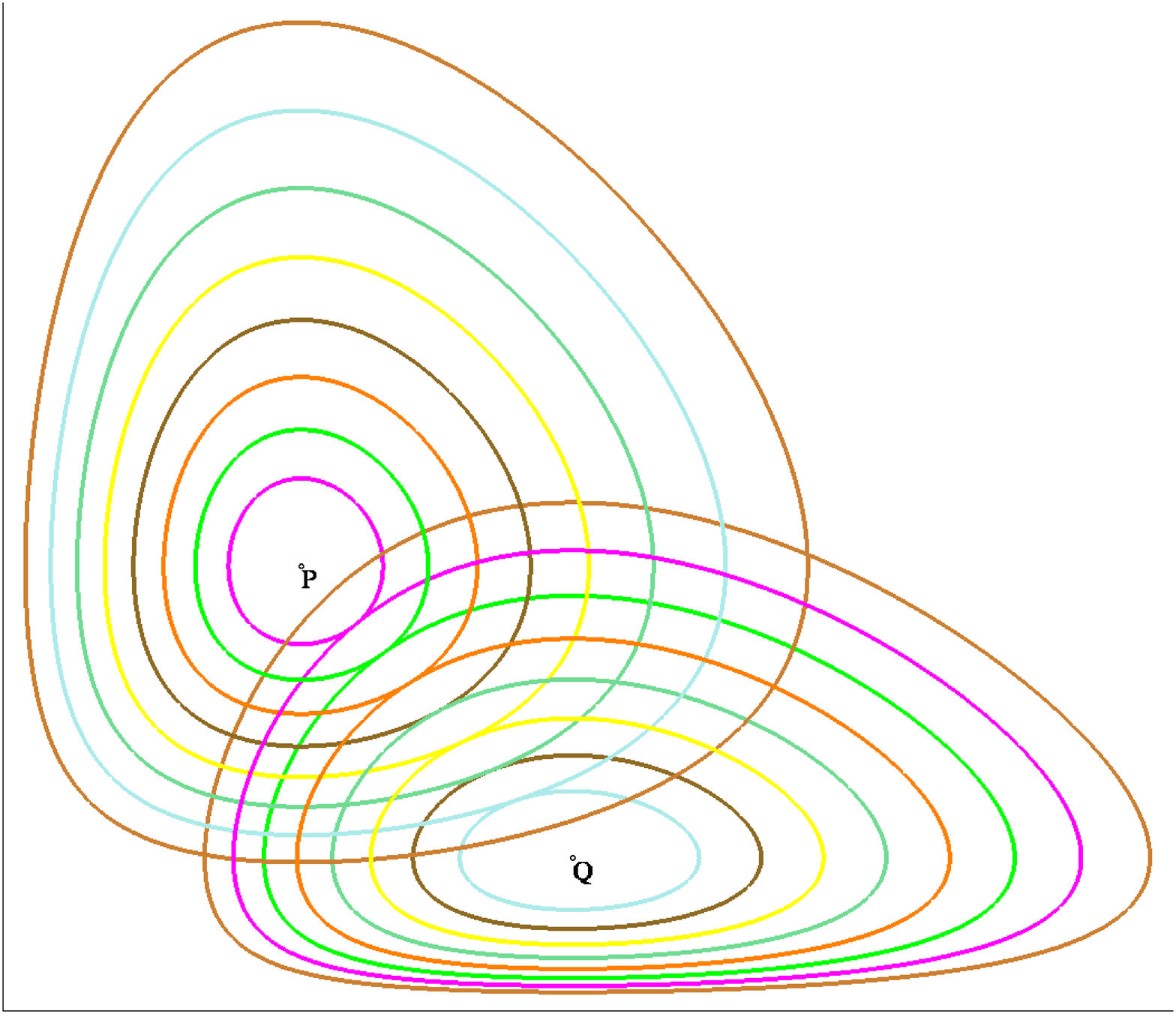}
\caption{\footnotesize In this picture we show some periodic orbits of
the Volterra system $(E_0)$ with center at $P=P_0$
as well as of the
perturbed system $(E_{\mu})$ with center at $Q= P_{\mu}\,$
(for a certain $\mu \in \,]0,a[\,$).}
\end{figure}

\medskip

The Volterra's system (also called Lotka--Volterra's with reference to the work of
Alfred Lotka, who first in 1920 used the same system as a description for certain chemical reactions)
has been sometimes criticized by ecologists and biologists, who refused to consider such a model
as accurate (see \cite{Br-93} for a discussion about this topic), and, in the course of the years,
many variants of it have been proposed. Volterra's himself modified system
$(E_0)$ in \cite{Vob-31,Voa-31}, by replacing the Malthusian growth rates with logistic terms of Verhulst type (see also \cite{Ma-04}).
In order to incorporate the effects of a cyclic environment, periodic coefficients have been
introduced both in the basic model and in its variants.
The past forty years have witnessed a growing interest in such kind of models and several results
have been obtained about the existence, multiplicity and stability of periodic solutions
for Lotka--Volterra type predator--prey models with periodic coefficients
\cite{AmOr-94,Ba-81,Ba-99,BuHu-91,BuFr-81,Cu-77,DiHuZa-95,DiZa-93,DiZa-96,Ha-82,HaMa-87,HaMa-91,Li-95,LGOrTi-96,Or-98,Ta-87}.

\medskip

If we take the Volterra's original model $(E_0)$ and assume a seasonal effect on the coefficients,
we are led to consider a new system of the form
\begin{equation*}
\left\{
\begin{array}{ll}
x'= x( a(t) - b(t) y)\\
y'= y(-c(t) + d(t) x),\\
\end{array}
\right.
\eqno{(E)}
\end{equation*}
where $a(\cdot), b(\cdot), c(\cdot), d(\cdot): {\mathbb R}\to {\mathbb R}$
are periodic functions with a common period $T > 0.$
In such a framework, it is natural to look for harmonic (i.e. $T$--periodic)
or $m$-th order subharmonic solutions (i.e. $mT$--periodic, for some integer $m\geq 2,$ with
$mT$ the minimal period in the set $\{jT: j=1,2,\dots\}$) having
range in the open first quadrant (\textit{positive solutions}).
With this respect, we have the following theorem which
can be derived as a corollary of some results in \cite{DiZa-96} dealing with certain classes
of time--periodic Kolmogorov systems. In Theorem \ref{th-1.1} below,
as well as in the other results of this paper, solutions are meant in the
Carath\'{e}odory sense, that is, $(x(t),y(t))$ is absolutely continuous and satisfies system $(E)$
for almost every $t \in {\mathbb R}.$
Of course, such solutions are of class $C^1$ if the coefficients are continuous.

\begin{theorem}\label{th-1.1}
Suppose that $b(\cdot)$ and $d(\cdot)$ are continuous functions such that
$$b(t)>0,\; d(t) > 0,\quad\forall\, t\in [0,T]$$
and let $a(\cdot), c(\cdot) \in L^1([0,T])$ be such that
$$\bar{a} := \frac{1}{T}\int_0^T a(t)\,dt > 0,\quad \bar{c} := \frac{1}{T}\int_0^T c(t)\,dt > 0.$$
Then the following conclusions hold:
\begin{itemize}
\item[$(e_1)\;$] System $(E)$ has at least one positive $T$--periodic solution\,;
\item[$(e_2)\;$] There exists an index $m^*\geq 2$ such that, for every $m\geq m^*,$  there are at least
two subharmonic solutions $({\tilde{x}}_{1,m},{\tilde{y}}_{1,m})$ and $({\tilde{x}}_{2,m},{\tilde{y}}_{2,m})$
of order $m$ for $(E)$ which do not belong to the same periodicity class and satisfy
$$\begin{array}{lll}
\lim_{m\to \infty}{(\min {\tilde{x}}_{i,m})} = \lim_{m\to \infty}{(\min {\tilde{y}}_{i,m})} = 0\\
\lim_{m\to \infty}{(\max {\tilde{x}}_{i,m})} = \lim_{m\to \infty}{(\max {\tilde{y}}_{i,m})} = + \infty\\
\end{array}\qquad (i=1,2).$$
\end{itemize}
\end{theorem}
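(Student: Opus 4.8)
The plan is to reduce the problem to the study of the period-$T$ Poincaré map of an equivalent planar Hamiltonian system and then to invoke the Poincaré–Birkhoff fixed point theorem. First I would introduce the logarithmic change of variables $u=\log x$, $v=\log y$, which conjugates $(E)$ on the open first quadrant to the system $u'=a(t)-b(t)e^{v}$, $v'=-c(t)+d(t)e^{u}$ defined on the whole plane $\mathbb{R}^{2}$. A direct check shows that this system is Hamiltonian, with the time-dependent Hamiltonian $H(t,u,v)=c(t)u-d(t)e^{u}+a(t)v-b(t)e^{v}$; consequently its period-$T$ map $\Phi$ is an area-preserving diffeomorphism, precisely the setting required by the twist theorem. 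Before proceeding I would verify that solutions are globally defined in time, so that $\Phi$ and all its iterates are well defined on $\mathbb{R}^{2}$: since $u'\le a(t)$ bounds $u$ from above, this bounds $v'\le -c(t)+d(t)e^{u}$ from above, hence $v$ from above, hence $u'\ge a(t)-b(t)e^{v}$ from below, while $v'\ge -c(t)$ directly; thus the exponential terms prevent any finite-time blow-up, which I would record as a preliminary a priori bound.

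Next I would establish conclusion $(e_{1})$. Integrating each equation over one period shows that a $T$-periodic solution must satisfy $\int_{0}^{T}b(t)e^{v(t)}\,dt=T\bar a$ and $\int_{0}^{T}d(t)e^{u(t)}\,dt=T\bar c$, and from these mean-value identities, together with the positivity of $b,d$ and of $\bar a,\bar c$, I would derive uniform a priori bounds for all $T$-periodic solutions in the $(u,v)$-plane. A standard topological-degree / continuation argument of exactly the type carried out for general Kolmogorov systems in \cite{DiZa-96} then produces a fixed point $z^{\ast}$ of $\Phi$, that is, a positive $T$-periodic solution of $(E)$.

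For the subharmonics in $(e_{2})$ the crucial ingredient is a twist estimate for the rotation number of $\Phi$ about the fixed point $z^{\ast}$. The predator–prey sign structure makes solutions circulate around $z^{\ast}$ in the counterclockwise sense, so I would fix an angular coordinate $\theta$ centred at $z^{\ast}$ and measure the rotation performed over $[0,T]$. Comparison with the averaged autonomous center shows that on a suitable Jordan curve encircling $z^{\ast}$ the rotation number is bounded below by a positive constant $\rho_{1}$, whereas on large circles the orbits linger in the regions where $\dot\theta$ is small—the autonomous orbits of large amplitude have period tending to $+\infty$—so the rotation number tends to $0$ as the radius tends to $+\infty$. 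Fixing $m^{\ast}$ with $1/m^{\ast}<\rho_{1}$, for every $m\ge m^{\ast}$ I would choose an annulus whose inner boundary has rotation number $>1/m$ and whose outer boundary has rotation number $<1/m$, and apply a suitable version of the Poincaré–Birkhoff theorem with rotation number $1/m$. As $\gcd(1,m)=1$, this produces at least two geometrically distinct periodic points of $\Phi$ of minimal period $m$, hence two subharmonic solutions of order $m$ in distinct periodicity classes. Finally, since forcing the rotation number down to $1/m\to 0$ pushes these periodic points onto orbits of ever-increasing size, the corresponding solutions $(u,v)$, and hence $x=e^{u}$, $y=e^{v}$, oscillate with unbounded amplitude, yielding the limits $\min\to 0$ and $\max\to +\infty$.

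I expect the main obstacle to be the rigorous twist estimate for the genuinely non-autonomous map $\Phi$: unlike the autonomous case there is no conserved energy and no exact period function, so the bounds on the rotation number must be obtained by comparison and averaging arguments that control $\dot\theta$ along solutions uniformly over $[0,T]$, both on the inner curve around $z^{\ast}$ and for large initial data. A secondary technical point is ensuring that the minimal period of each subharmonic is exactly $mT$ and not a proper divisor, which is guaranteed by working with the rotation number $1/m$ in lowest terms.
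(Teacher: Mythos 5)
The paper gives no proof of Theorem \ref{th-1.1}, deferring entirely to \cite{DiZa-96}, and your sketch reconstructs essentially the strategy of that reference: the logarithmic substitution $u=\log x$, $v=\log y$ yielding a Hamiltonian system with area-preserving Poincar\'e map, a priori bounds plus a degree/continuation argument for $(e_1)$, and a rotation-number twist estimate combined with the Poincar\'e--Birkhoff theorem for the subharmonics in $(e_2)$. The remaining caveats are routine for this route and you flag the main one yourself: one must invoke a version of Poincar\'e--Birkhoff that neither requires invariant boundary curves nor allows the two fixed points of $\Phi^m$ to lie on the same $\Phi$-orbit, and the uniform smallness of the rotation number for large initial data (no conserved energy) is the genuinely technical step.
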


\bigskip

\noindent
For a proof and other details, see \cite{DiZa-96}.
We refer to \cite{DiZa-93} for detailed information about the subharmonic solutions of system $(E)$
and to \cite{AmOr-94,Li-95,Or-98} for results about the stability and the number of solutions.
See also \cite{Ba-99,DiHuZa-95,DiZa-96} for more general conditions on the coefficients
ensuring a priori bounds and existence of $T$--periodic positive solutions.

\bigskip
Let us come back for a moment to the original Volterra system with constant coefficients and
suppose that the interaction between the two populations
is governed by system $(E_0)$ for a certain period
of the season (corresponding to a time--interval of length $r_0$)
and by system $(E_{\mu})$ for the rest of the time (corresponding to a time--interval of length $r_{\mu}$).
Assume also that such alternation between $(E_0)$ and $(E_{\mu})$ occurs in a
periodic fashion, so that
$$T:=r_0 + r_{\mu}$$
is the period of the season. In other terms, we consider at first system $(E_0)$ for $t\in [0,r_0[\,.$ Next we switch
to system $(E_{\mu})$ at time $r_0$ and assume that $(E_{\mu})$
rules the dynamics for $t\in [r_0,T[\,.$ Finally, we suppose that
we switch again to system $(E_0)$ at time $t=T$ and repeat the cycle with $T$--periodicity.

Such two--state alternating behavior can be
equivalently described in terms of equation $(E),$ by assuming
$$
a(t)= {\hat{a}}_{\mu}(t):= \left\{
\begin{array}{llll}
a\,\quad &\mbox{for } \, 0\leq t < r_0\\
a - \mu\,\quad &\mbox{for } \, r_0\leq t < T\,,
\end{array}
\right.
$$
$$
c(t)={\hat{c}}_{\mu}(t):= \left\{
\begin{array}{llll}
c\,\quad &\mbox{for } \, 0\leq t < r_0\\
c + \mu\,\quad &\mbox{for } \, r_0\leq t < T\,,
\end{array}
\right.
$$
as well as
$$b(t)\equiv b,\;\; d(t)\equiv d,$$
with $a,b,c,d$ positive constants and $\mu$ a parameter with $0 < \mu < a.$
Hence we can now consider system
\begin{equation*}
\left\{
\begin{array}{ll}
x'= x( {\hat{a}}_{\mu}(t) - b y)\\
y'= y(-{\hat{c}}_{\mu}(t) + d x),\\
\end{array}
\right.
\eqno{(E^*)}
\end{equation*}
where the piecewise constant functions ${\hat{a}}_{\mu}$ and ${\hat{c}}_{\mu}$ are supposed to be
extended to the whole real line by $T$--periodicity.
Clearly, Theorem \ref{th-1.1} holds for equation $(E^*),$ ensuring the existence of at
least one positive $T$--periodic solution and $m$-th order subharmonics of any sufficiently large order.

\medskip

It is our aim now to prove that system $(E^*)$ generates far richer dynamics. Indeed, we'll show the
presence of a \textit{topological horseshoe} for the Poincar\'{e} map
$$\phi: ({\mathbb R}^+_0)^2 \to ({\mathbb R}^+_0)^2,\quad
\phi(z):= \zeta(T,z),$$
where $\zeta(\cdot,z) = (x(\cdot,z),y(\cdot,z))$ is the solution of $(E^*)$
starting from $z = (x_0,y_0)\in ({\mathbb R}^+_0)^2$ at the time $t=0.$
As a consequence, all the
complexity which is associated to an horseshoe geometry (like, for instance,
a semiconjugation to the Bernoulli shift, sensitivity with respect to initial conditions,
positive topological entropy, a compact set containing a dense subset of periodic points)
will be guaranteed.
To this goal, we apply recent developments \cite{PaPiZa->} which connect the analysis of
certain planar ODEs to the theory of \textit{linked twist maps}.
With such a term, one usually designates some geometric configurations characterized by
the alternation of two planar homeomorphisms (or diffeomorphisms) which twist two overlapping
annuli. More precisely, we have two annular regions ${\mathcal A}$ and ${\mathcal B}$
which cross in two disjoint topological rectangles ${\mathcal R}_1$ and
${\mathcal R}_2\,.$ Each annulus is turned onto itself by a homeomorphism
which leaves the boundaries of the annulus invariant.
Both the maps act in their domain so that a twist effect is produced.
This happens, for instance, when the angular speed is monotone with respect
to the radius. Under certain assumptions,
it is possible to prove the existence of a Smale horseshoe inside ${\mathcal R}_{i}$
($i=1,2)$ \cite{De-78}.
Linked twist maps (LTMs) were originally studied in the 80s by
Devaney \cite{De-78}, Burton and Easton \cite{BuEa-80} and Przytycki \cite{Pr-83, Pr-86}.
As observed in \cite{De-78},
such maps naturally appear in mathematical models for particle motions in a magnetic field
and in differential geometry. Geometrical configurations
related to LTMs can be also found in the restricted three--body problem
\cite[pp. 90--94]{Mo-73}.
In more recent years significant applications of LTMs have been performed in the area of
fluid mixing (see, for instance, \cite{StOtWi-06, Wi-99, WiOt-04}).

\medskip

With the aid of Figure 2 we try now to explain how to show the
presence of a horseshoe--type geometry for the switching system
$(E^*).$ As a first step, we take two closed overlapping annuli
made up by level lines of the first integrals associated to system
$(E_0)$ and $(E_{\mu}),$ respectively. In particular, the inner
and outer boundaries of each annulus are closed trajectories
surrounding the equilibrium point ($P=P_0$ for system $(E_0)$ and
$Q=P_{\mu}$ for system $(E_{\mu})$). Such annuli (that we call
from now on ${\mathcal A}_P$ and ${\mathcal A}_Q$) intersect in
two compact disjoint rectangular sets ${\mathcal R}_1$ and
${\mathcal R}_2\,.$ The order in which we decide to name the two  regions
${\mathcal R}_1$ and ${\mathcal R}_2$ is completely arbitrary.
Whenever we enter in a
setting like that described in Figure 2, we say that the annuli
${\mathcal A}_P$ and ${\mathcal A}_Q$ are \textit{linked
together}. Technical conditions on the energy level lines defining
${\mathcal A}_P$ and ${\mathcal A}_Q\,,$ which ensure the linking
condition, are presented in Section \ref{sec-2}.

\begin{figure}[ht]\label{fig-2}
\centering
\includegraphics[scale=0.28]{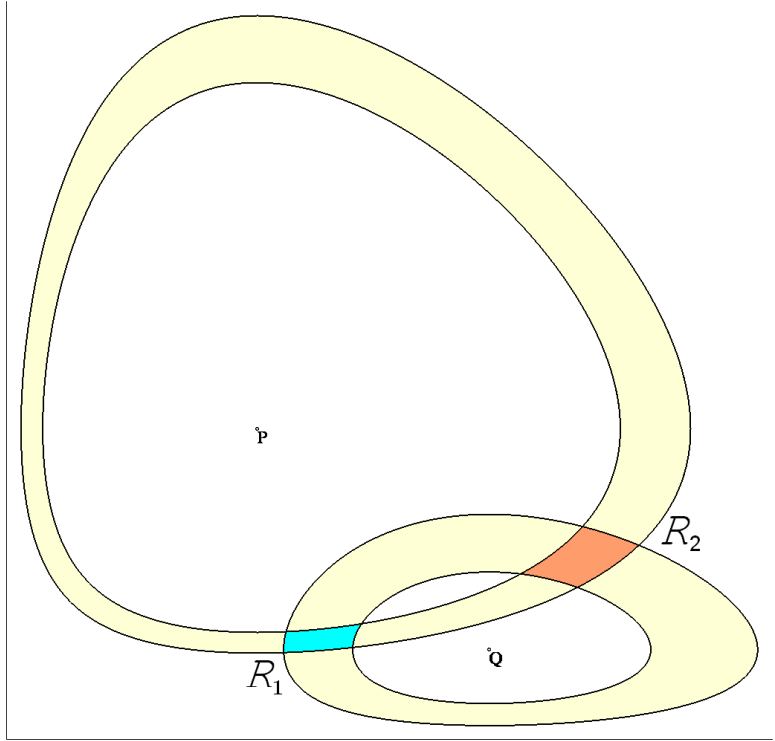}
\caption{\footnotesize The two annular regions $\mathcal A_P$ and
$\mathcal A_Q$ (centered at $P$ and $Q,$ respectively)  are linked
together. We have drawn with a darker color the two
rectangular sets $\mathcal R_1$ and $\mathcal R_2$ where they
meet.}
\end{figure}

As a second step, we give an ``orientation'' to ${\mathcal R}_i$ (for $i=1,2$)
by selecting, in the boundary, two disjoint compact arcs
(i.e. homeomorphic images of the compact unit interval of the real line)
that we denote by ${\mathcal R}^-_{i,\,{ left}}$ and
${\mathcal R}^-_{i,\,{ right}}$ and call the \textit{left} and the \textit{right} sides
of $\partial{\mathcal R}_i\,.$ We also set
$${\mathcal R}^-_{i}\,:={\mathcal R}^-_{i,\,{ left}}\cup {\mathcal R}^-_{i,\,{right}}\,.$$
The closure of $\partial{\mathcal R}_i \setminus {\mathcal
R}^-_{i}$ is denoted by ${\mathcal R}_i^+\,.$ It consists of two
disjoint components which are compact arcs that we name the
\textit{down} and \textit{up} sides of $\partial{\mathcal R}_i\,$
(for a precise definition of \textit{oriented rectangle}, see
Section \ref{sec-3}). In the specific example
of Figure 2, we orientate ${\mathcal R}_1$ and ${\mathcal R}_2$ as
follows. We take as ${\mathcal R}_1^-$ the intersection of
${\mathcal R}_1$ with the inner and outer boundaries of ${\mathcal
A}_P$ and as ${\mathcal R}_2^-$ the intersection of ${\mathcal
R}_2$ with the inner and outer boundaries of ${\mathcal A}_Q\,.$
The way in which we choose to name (as left/right) the two
components of ${\mathcal R}_i^-$ is inessential for the rest of
the discussion. Just to fix the ideas, let's say that we call
``left'' the component of ${\mathcal R}_1^-$ which is closer to
$P$ and the component of ${\mathcal R}_2^-$ which is closer to the
equilibrium point $Q$ (of course, the ``right'' components will be
the other ones).

\medskip
As a third step, we observe that the Poincar\'{e} map associated to $(E^*)$ can be
decomposed as
$$\phi= \phi_{\mu}\circ \phi_0\,,$$
where $\phi_0$ is the Poincar\'{e} map of system $(E_0)$ on the time--interval $[0,r_0]$
and $\phi_{\mu}$ is the Poincar\'{e} map for $(E_{\mu})$ on the time--interval $[0,r_{\mu}]= [0,T-r_0].$
\\
Consider also a path $\gamma: [0,1]\to {\mathcal R}_1$ with $\gamma(0)\in
{\mathcal R}^-_{1,\,{left}}$ and $\gamma(1)\in
{\mathcal R}^-_{1,\,{right}}\,.$ As we'll see in Section \ref{sec-2},
the points of ${\mathcal R}^-_{1,\,{left}}$ move faster than those belonging to
${\mathcal R}^-_{1,\,{right}}$ under the action of system $(E_0).$ Hence, for a
choice of the first switching time $r_0$ large enough, it is possible to
make the path
$$[0,1]\ni \theta\mapsto \phi_0(\gamma(\theta))$$
turn in a spiral--like fashion inside the annulus ${\mathcal A}_P$
and cross at least twice the rectangular region ${\mathcal R}_2$ from
${\mathcal R}^-_{2,\,{left}}$ to ${\mathcal R}^-_{2,\,{right}}\,.$
Thus we can select two sub--intervals of $[0,1]$ such that $\phi_0\circ \gamma$
restricted to each of these intervals is a path contained in ${\mathcal R}_2$
which connects the two components of ${\mathcal R}_2^-\,.$
\\
Now, we observe that the points of ${\mathcal R}^-_{2,\,{left}}$ move faster than those belonging to
${\mathcal R}^-_{2,\,{right}}$ under the action of system $(E_{\mu}).$
Therefore, we can repeat the same argument as above and conclude that for a suitable
choice of $r_{\mu}=T-r_0$ large enough we can transform, via $\phi_{\mu}\,,$ any path in ${\mathcal R}_2\,$
joining the two components of ${\mathcal R}_2^-\,$ onto a path which crosses at least once
${\mathcal R}_1$ from ${\mathcal R}^-_{1,\,{left}}$ to ${\mathcal R}^-_{1,\,{right}}\,.$

\medskip

As a final step, we complete our proof about the existence of chaotic--like dynamics
by applying a topological lemma that we recall
in Section \ref{sec-3} as Lemma \ref{lem-4.1} for the reader's convenience.

In conclusion, our main result
can be stated as follows.

\begin{theorem}\label{th-1.2}
For any choice of positive constants $a,b,c,d, \mu$ with $\mu < a$
and for every pair $({\mathcal A}_P,{\mathcal A}_Q)$ of annuli linked together,
the following conclusion holds:
\\
For every integer $m\geq 2$
there exist two positive constants $\alpha$ and $\beta,$ such that
for each
$$r_0 > \alpha\quad\mbox{ and }\; \; r_{\mu} > \beta,$$
the Poincar\'{e} map associated to system $(E^*)$ induces chaotic
dynamics on $m$ symbols in ${\mathcal R}_1$ and in ${\mathcal
R}_2\,.$
\end{theorem}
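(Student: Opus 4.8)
The plan is to realize the Poincaré map $\phi$ as a composition of two twist maps and to verify, for the specific Volterra fields, the geometric stretching hypotheses of the linked twist map lemma (Lemma \ref{lem-4.1}). I would start from the decomposition $\phi = \phi_\mu \circ \phi_0$ recorded above and use that both $(E_0)$ and $(E_\mu)$ are conservative systems with a global center, so that $\phi_0$ keeps every level line of ${\mathcal E}_0$ invariant (in particular the two boundary curves of ${\mathcal A}_P$) while $\phi_\mu$ keeps every level line of ${\mathcal E}_\mu$ invariant (the two boundary curves of ${\mathcal A}_Q$). On ${\mathcal A}_P$ I would pass to coordinates $(\rho,\vartheta)$, where $\rho$ labels the energy level and $\vartheta$ is an angular variable increasing along the flow, in which $\phi_0$ becomes a pure twist $(\rho,\vartheta)\mapsto(\rho,\vartheta+\rot_0(r_0,\rho))$ with $\rot_0(r_0,\rho)=r_0/\tau_0(\rho)$ the number of turns performed in time $r_0$ along the orbit of minimal period $\tau_0(\rho)$; and similarly for $\phi_\mu$ on ${\mathcal A}_Q$.

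The decisive point is the twist estimate. Linearizing $(E_0)$ at $P_0$ gives purely imaginary eigenvalues $\pm i\sqrt{ac}$, so $\tau_0(\rho)$ tends to $2\pi/\sqrt{ac}$ as $\rho$ approaches the center value and grows unboundedly as the orbit swells toward the boundary of the quadrant. I would then prove that $\tau_0$ is strictly monotone across ${\mathcal A}_P$ — equivalently that the points of ${\mathcal R}^-_{1,\,{left}}$ (inner boundary) rotate strictly faster than those of ${\mathcal R}^-_{1,\,{right}}$ (outer boundary). After the change of variables $u=\log x,\,v=\log y$, system $(E_0)$ is Hamiltonian with Hamiltonian ${\mathcal E}_0(e^u,e^v)$, and the monotonicity reduces to a sign analysis of the derivative of the period integral. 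This classical but genuinely technical computation is the step I expect to be the main obstacle, since it is the only place where the explicit form of the Volterra field enters; strict monotonicity (or, failing that, a definite gap $\tau_0(\rho_{\mathrm{in}})\neq\tau_0(\rho_{\mathrm{out}})$) is exactly what is needed. Granting it, the relative twist between the two boundaries over time $r_0$,
$$r_0\left|\frac{1}{\tau_0(\rho_{\mathrm{in}})}-\frac{1}{\tau_0(\rho_{\mathrm{out}})}\right|,$$
is strictly positive and diverges as $r_0\to\infty$; the analogous statement holds for $\phi_\mu$ on ${\mathcal A}_Q$ as $r_\mu\to\infty$.

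With the monotone twist established, I would convert it into the stretching-along-paths relations required by Lemma \ref{lem-4.1}. Fixing $m\geq 2$, I would choose $\alpha$ so large that for every $r_0>\alpha$ the relative twist above exceeds $m$ full turns; then for any path $\gamma$ in ${\mathcal R}_1$ joining ${\mathcal R}^-_{1,\,{left}}$ to ${\mathcal R}^-_{1,\,{right}}$, monotonicity forces the image $\phi_0\circ\gamma$ to wind monotonically inside ${\mathcal A}_P$ and, since ${\mathcal A}_P$ and ${\mathcal A}_Q$ are \emph{linked together}, to traverse ${\mathcal R}_2$ from ${\mathcal R}^-_{2,\,{left}}$ to ${\mathcal R}^-_{2,\,{right}}$ at least $m$ times. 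Restricting $\gamma$ to the corresponding $m$ sub-intervals exhibits $m$ pairwise disjoint sub-paths, each stretched by $\phi_0$ across ${\mathcal R}_2$. Symmetrically, I would pick $\beta$ so that for $r_\mu>\beta$ the map $\phi_\mu$ stretches any path joining the two sides of ${\mathcal R}_2$ across ${\mathcal R}_1$.

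Composing these two relations shows that the cross-covering hypotheses of Lemma \ref{lem-4.1} are met: $\phi_0$ stretches ${\mathcal R}_1$ across ${\mathcal R}_2$ with multiplicity $m$ and $\phi_\mu$ stretches ${\mathcal R}_2$ across ${\mathcal R}_1$, so that $\phi=\phi_\mu\circ\phi_0$ self-covers ${\mathcal R}_1$ in an $m$-fold fashion; the conjugate composition $\phi_0\circ\phi_\mu=\phi_0\,\phi\,\phi_0^{-1}$ plays the symmetric role for ${\mathcal R}_2$. The lemma then delivers chaotic dynamics on $m$ symbols in each of ${\mathcal R}_1$ and ${\mathcal R}_2$ (semiconjugacy to the Bernoulli shift on $m$ symbols, positive topological entropy, sensitive dependence, and a compact invariant set containing a dense subset of periodic points). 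Since the constants $\alpha,\beta$ depend only on $m$ and on the fixed data $a,b,c,d,\mu$ together with the chosen linked annuli, the conclusion holds for all $r_0>\alpha$ and $r_\mu>\beta$, which is the assertion of the theorem.
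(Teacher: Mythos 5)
Your proposal follows essentially the same route as the paper: decompose the Poincar\'{e} map as $\phi=\phi_\mu\circ\phi_0$, use the strict monotonicity of the period maps $\tau_0,\tau_\mu$ to obtain a boundary twist on each annulus, convert the accumulated relative twist (for $r_0,r_\mu$ large) into stretching-along-paths relations between the oriented rectangles $\widetilde{\mathcal R_1}$ and $\widetilde{\mathcal R_2}$ via the intermediate value theorem, and conclude with Lemma \ref{lem-4.1}. The one step you flag as the ``main obstacle,'' the strict monotonicity of the period map for the Volterra center, is indeed the crucial input, but it is a known classical result (Rothe, Waldvogel) which the paper simply cites rather than reproves, so your argument is complete modulo that reference.
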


In view of our result, one could conclude that
complex dynamics were already hidden in Volterra's work \cite{Vob-31},
since linked twist maps (on long periods) appear as a consequence of the monotonicity of the
period map and of two Volterra's principles:
\textit{
\begin{itemize}
\item{}
``Se si cerca di distruggere uniformemente e proporzionalmente al loro numero gli individui delle due specie,
cresce la media del numero di individui della specie mangiata e
diminuisce quella degli individui della specie mangiante'';
\\
{\footnotesize{\rm(If one tries to destroy uniformly and proportionally to their number the individuals of the
two species, the average of the number of individuals of the eaten species increases and the one of the eating species decreases);}}
\smallskip
\item{}
``Se si distruggono contemporaneamente e uniformemente
individui delle due specie, cresce il rapporto dell'ampiezza della fluttuazione della specie mangiata
all'ampiezza della fluttuazione della specie mangiante''.
\\
{\footnotesize{\rm(If one destroys at the same time and uniformly the individuals of the
two species, the ratio between the amplitude of the fluctuation of the eaten species and
the amplitude of the fluctuation of the eating species increases).}}
\end{itemize}
}
\noindent
Indeed, the first principle says that the position of the center, around which the annuli
can be constructed, varies according to the strength of the
fishing, while
the second principle implies that the shapes of the annuli are suitable for a linking (see \cite[fig. 6]{Vob-31}). The twist conditions
on the boundaries for the associated Poincar\'{e} maps come from an analysis on the periods of the orbits
(which was carried on by Volterra in the limit case, i.e. for orbits near the equilibrium points).

\medskip
In order to clarify the meaning of Theorem \ref{th-1.2}, we introduce the precise concept
of chaotic dynamics that we consider in this work. Our definition
is a modification of the corresponding one in \cite{KiSt-89} and abstracts the
usual interpretation of chaos as the possibility of realizing any coin--flipping sequence,
by giving also a special emphasis to the presence of periodic orbits.
Definitions presenting similar features
have been considered by several authors dealing with nonautonomous ODEs
with periodic coefficients \cite{CaDaPa-02,SrWo-97,WoZg-00}, as well as in abstract theorems
about periodic points and chaotic--like dynamics in metric spaces \cite{Sr-00}.
We refer to Section \ref{sec-3} for a more detailed discussion about the kind
of complex dynamics involved in Definition \ref{def-1.1} below.

\begin{definition}\label{def-1.1}
{\rm
Let $Z$ be a metric space and $\psi: Z\supseteq D_{\psi}\to Z$ be a map. Let also
${\mathcal D}\subseteq D_{\psi}$ be a nonempty set and $m\geq 2$ be an integer.
We say that \textit{$\psi$ induces chaotic dynamics on $m$ symbols
in ${\mathcal D}$} if there exist $m$ pairwise disjoint (nonempty) compact sets
$${\mathcal K}_1\,, {\mathcal K}_2\,,\dots, {\mathcal K}_m\,\subseteq {\mathcal D},$$
such that, for each two--sided sequence of symbols
$$(s_i)_{i\in {\mathbb Z}}\,\in \Sigma_m\,:=\{1,\dots,m\}^{\mathbb Z}\,,$$
there exists a two--sided sequence of points
$$
(w_i)_{i\in {\mathbb Z}}\,\in {\mathcal D}^{\mathbb Z},$$
with
\begin{equation}\label{eq-1.ch1}
w_i \,\in {\mathcal K}_{s_i}\quad \mbox{and } \; w_{i+1} = \psi(w_i),\qquad\forall\, i\in {\mathbb Z}.
\end{equation}
Moreover, if $(s_i)_{i\in {\mathbb Z}}$ is a $k$--periodic
sequence (that is, $s_{i+k} = s_i\,,\forall\, i\in {\mathbb Z}$)
for some $k\geq 1,$ then there exists a $k$--periodic sequence
$(w_i)_{i\in {\mathbb Z}}$ satisfying \eqref{eq-1.ch1}. When we
wish to put in evidence the role of the sets $\mathcal K_j$'s,
we'll also say that \textit{$\psi$ induces chaotic dynamics on $m$
symbols in the set $\mathcal D$ relatively to $(\mathcal
K_1,\dots,\mathcal K_m)$}. }
\end{definition}

\noindent From this definition
it follows that if $\psi$ is \textit{continuous} and \textit{injective}
\footnote{Such assumptions are fulfilled in our application: indeed, in Theorem \ref{th-1.2},
$\psi = \phi$ is a homeomorphism, being the Poincar\'{e} map associated to $(E^*).$}
on the set
$${\mathcal K}:= \bigcup_{j=1}^{m} {\mathcal K}_j\,\subseteq {\mathcal D},$$
then there exists a nonempty compact set
$${\mathcal I}\subseteq {\mathcal K},$$
for which the following properties are fulfilled:
\begin{itemize}
\item{}\;
${\mathcal I}$
is invariant for $\psi$ (i.e. $\psi({\mathcal I}) = {\mathcal I}$);
\item{}\;
$\psi|_{\mathcal I}$ is semiconjugate to the Bernoulli shift on $m$ symbols, that is,
there exists a continuous map $g$ of ${\mathcal I}$ onto $\Sigma_m$ such that the diagram
\begin{equation}\label{diag-1}
\begin{diagram}
\node{{\mathcal I}} \arrow{e,t}{\psi} \arrow{s,l}{g}
      \node{{\mathcal I}} \arrow{s,r}{g} \\
\node{\Sigma_m} \arrow{e,b}{\sigma}
   \node{\Sigma_m}
\end{diagram}
\end{equation}
commutes, where $\sigma$ is the Bernoulli shift on $m$ symbols (i.e. $\sigma:{\Sigma_m}\to{\Sigma_m}$ is the homeomorphism
defined by $\sigma((s_i)_i):=(s_{i+1})_i,\,\forall i\in\mathbb Z);$
\item{}\;
The counterimage $g^{-1}(\textbf{s})\subseteq {\mathcal I}$ of
every
$k$--periodic sequence $\textbf{s} = (s_i)_{i\in {\mathbb Z}}\in \Sigma_m$
contains at least one $k$--periodic point.
\end{itemize}

\medskip
\noindent
For a proof, see Lemma \ref{lem-4.2} in Section \ref{sec-3}. From the above properties it also follows that
$$h_{\rm top}(\psi|_{\mathcal I})\geq h_{\rm top}(\sigma) = \log(m),$$
where $h_{\rm top}$ is the topological entropy \cite{Wa-82}.
Moreover, according to \cite[Lemma 4]{KeKoYo-a01}, there exists a (nonempty) compact invariant set
${\mathcal I }\,'\subseteq {\mathcal I}$ such that
$\psi|_{{\mathcal I }\,'}$ has sensitive dependence on initial conditions,
i.e. $\exists \, \delta > 0:$ $\forall w\in {\mathcal I }\,'$ there is a sequence $w_i$
of points in ${\mathcal I }\,'$ such that $w_i\to w$ and
for each $i\in\mathbb N$ there exists $m=m(i)$ with $\dist(\psi^m(w_i), \psi^m(w))\ge\delta\,.$

\medskip

As remarked in \cite{PaPiZa->}, if we look at Definition \ref{def-1.1} and its consequences
in the context of
concrete examples of ODEs (for instance when $\psi$ turns out to be the Poincar\'{e} map),
condition \eqref{eq-1.ch1} may be sometimes interpreted in terms of the oscillatory behavior of the
solutions. Such situation occurred in \cite{PaZa-a04a, PaZa-08} and takes place
also for system $(E^*).$
Indeed,
from the proof of Theorem \ref{th-1.2}, one sees that it is possible to provide
more precise conclusions in the statement of our main result.
Namely, the following additional properties can be proved:
\\
\textit{
For every splitting of the integer $m\geq 2$ as
$$m= m_1\, m_2,\quad\mbox{with } \; m_1\,, m_2\, \in {\mathbb N}\,,$$
there exist nonnegative integers $\kappa_1\,, \kappa_2\,,$
with $\kappa_1 = \kappa_1(r_0,m_1)$ and $\kappa_2 = \kappa_2(r_{\mu},m_2),$
such that, for each two--sided sequence of symbols
$${\bf{s}} = (s_i)_{i\in{\mathbb Z}}= (p_i,q_i)_{i\in{\mathbb Z}}\in
\{1,\dots,m_1\}^{{\mathbb Z}}\times\{1,\dots,m_2\}^{{\mathbb Z}}\,,$$
there exists a solution
$$\zeta_{\bf{s}}(\cdot) = \bigl( x_{\bf{s}}(\cdot),y_{\bf{s}}(\cdot) \bigr)$$
of $(E^*)$ with
$\zeta_{\bf{s}}(0)\in {\mathcal R}_1$ such that $\zeta_{\bf{s}}(t)$ crosses  ${\mathcal R}_2$
exactly $\kappa_1 + p_i$ times for $t\in \,]iT,r_0 + iT [$ and crosses
${\mathcal R}_1$ exactly $\kappa_2 + q_i$ times for $t\in \,]r_0 + iT,(i+1)T[\,.$
Moreover, if $(s_i)_{i\in{\mathbb Z}}= (p_i,q_i)_{i\in{\mathbb Z}}$ is a periodic sequence,
i.e. $s_{i + k} = s_i\,$ for some $k\geq 1,$ then $\zeta_{\bf{s}}(t + k T) = \zeta_{\bf{s}}(t),$
$\forall\,t\in {\mathbb R}.$}

\smallskip

\noindent
Note that also the splitting $m= m_1\,m_2$ with $m_1 =m$ and $m_2=1$
(or $m_1=1$ and $m_2 =m$) is allowed.
For the precise connection between the role of the compact sets ${\mathcal K}_i$'s in Definition \ref{def-1.1}
and the oscillatory behavior of the solutions, we refer to the proof of Theorem \ref{th-1.2} in the next section.

\noindent
The constants $\alpha$ and $\beta$ which represent the lower bounds for $r_0$ and $r_{\mu}$
can be estimated in terms of $m_1$ and $m_2$ and other geometric parameters, like the
fundamental periods of the orbits bounding the linked annuli (see \eqref{alpha} and \eqref{beta}).

\bigskip
We end this introductory section with a few observations about our main result.

\smallskip

First of all, we note that, according to Theorem \ref{th-1.2},
there is an abundance of chaotic regimes for system $(E^*),$
provided that the time--interval lengths $r_0$ and $r_{\mu}$ (and,
consequently, the period $T$) are sufficiently large. Indeed, we
are able to prove the existence of chaotic invariant sets inside
each intersection of two annular regions linked together. One
could conjecture the presence of Smale horseshoes contained in
such intersections, like in the classical case of the linked twist
maps with circular annuli as domains \cite{De-78}. On the other hand, in our
approach, which is purely topological (like similar ones proposed
in \cite{GaZg-98,KeYo-01,Zg-96}), we just have to check a twist
hypothesis on the boundary, without the need of verifying any
hyperbolicity condition. Hence, our technique
allows to detect the presence of chaotic features by means of
elementary tools. This, of course, does not prevent the
possibility of a further deeper analysis using more complex
computations. We also observe that our result is stable with
respect to small perturbations of the coefficients. Indeed, as it
will be clear from the proof, whenever $r_0 > \alpha$ and
$r_{\mu} > \beta$ are chosen in order to achieve the conclusion of
Theorem \ref{th-1.2}, it follows that there exists a constant
$\varepsilon > 0$ such that Theorem \ref{th-1.2} applies to
equation $(E)$ too, provided that
$$
\int_0^T |a(t) - {\hat{a}}_{\mu}(t)|\,dt < \varepsilon,\quad
\int_0^T |c(t) - {\hat{c}}_{\mu}(t)|\,dt < \varepsilon,
$$
$$
\int_0^T |b(t) - b|\,dt < \varepsilon,\quad
\int_0^T |d(t) - d|\,dt < \varepsilon.
$$
Here, the $T$--periodic coefficients may be in $L^1([0,T])$ or even continuous or smooth
functions (possibly of class $C^{\infty}$).

\medskip

A further remark concerns the fact that, in our model, we have assumed that
the harvesting period starts and ends for both the species at the same moment.
With this respect one could face a more general situation
where some phase--shift between the two harvesting intervals occurs.
Such cases have been already explored in some biological models,
mostly from a numerical
point of view. See \cite{Na-86} for an example dealing with competing species
and \cite{RiMu-93} for a predator--prey system.
If we assume a phase--shift in the periodic coefficients, that is, if we
consider
$$a(t):= {\hat{a}}_{\mu}(t - \theta_1)\quad\mbox{ and } \;c(t):= {\hat{c}}_{\mu}(t - \theta_2),$$
for some $0<\theta_1,\theta_2<T,$
and we also suppose that the length $r_0$ of the time--intervals without harvesting
may be different for the two species (say $r_0 = r_a\in \, ]0,T[$ in the definition of
${\hat{a}}_{\mu}$ and $r_0 = r_c\in \, ]0,T[$ in the definition of
${\hat{c}}_{\mu}$), then
the geometry of our problem turns out to be a combination of linked twist maps on two, three or four
annuli (which are mutually linked together).
In this manner, we increase the possibility of chaotic configurations, provided that the system
is subject to the different regimes for sufficiently long time. For a pictorial comment, see Figure 3
where all the possible links among four annuli are realized.

\begin{figure}[ht]\label{fig-3}
\centering
\includegraphics[scale=0.3]{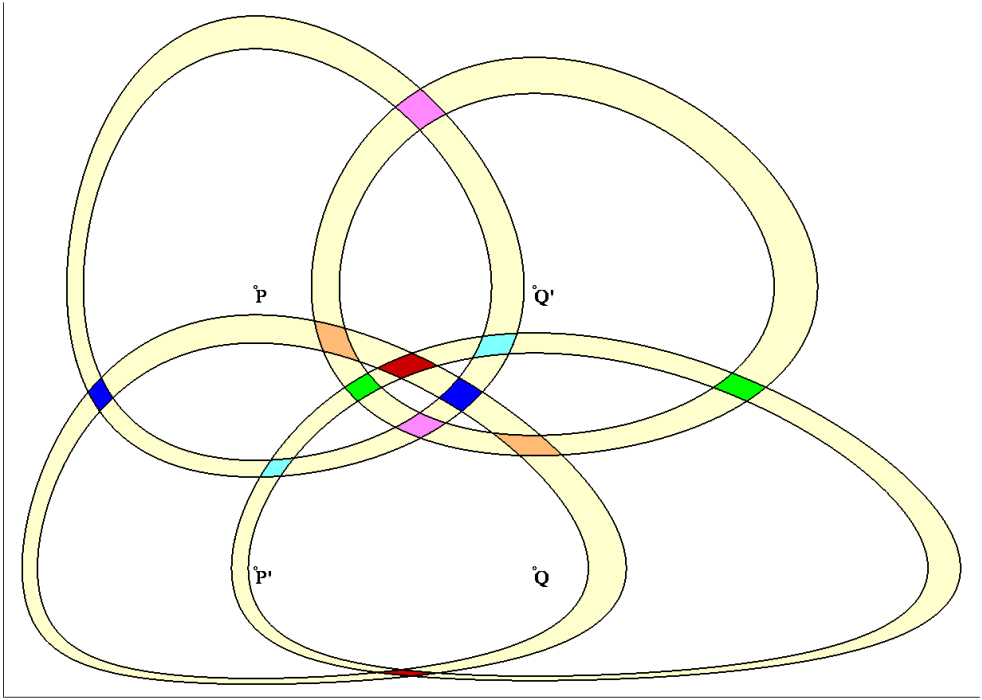}
\caption{\footnotesize We have depicted four linked annular regions bounded by
energy level lines corresponding to Volterra systems with centers at
$P=P_0,$ $P'= (c/d,a_{\mu}/b),$
$Q= P_{\mu}\,$ and $Q'=(c_{\mu}/d,a/b),$
by putting in evidence the regions of mutual intersection,
where it is possible to locate the chaotic invariant sets.}
\end{figure}

\medskip

\noindent
As a final observation, we notice that our approach can be also applied (modulo more technicalities)
to those time--periodic planar Kolmogorov systems \cite{Ko-36}
$$x'= X(t,x,y),\quad y'= Y(t,x,y),$$
which possess dynamical features similar to the ones described above for the Volterra's.
Investigations in this direction will be pursued elsewhere.

\bigskip

The paper is organized as follows. In Section \ref{sec-2} we provide the details for the proof of
Theorem \ref{th-1.2}, in the sense that, following the argument described above, we justify
all the steps by means of some technical estimates.
In Section \ref{sec-3} we recall the topological tools, the corresponding notation
and the abstract theorems which are employed along the paper. We conclude the article with
a brief discussion on chaotic dynamics in the sense of Definition \ref{def-1.1}.

\section{Technical estimates and proof of the main result}\label{sec-2}
Let us consider system $(E_0)$ and let
$$\ell > \chi_0:={\mathcal E}_0(P_0) = \min\{{\mathcal E}_0(x,y): \, x>0, y> 0\}.$$
The level line
$$\Gamma_0(\ell):=\{(x,y)\in ({\mathbb R}_0^+)^2\,:\, {\mathcal E}_0(x,y) = \ell\}$$
is a closed orbit (surrounding $P_0$) which is run in the counterclockwise sense,
completing one turn in a fundamental period that we denote by
$\tau_0(\ell).$
According to classical results on the period of the Lotka--Volterra system
\cite{Ro-85,Wa-86}, we know that the map
$$\tau_0\,: \,]\chi_0,+\infty[\,\to {\mathbb R}$$
is strictly increasing
with $\tau_{0}(+\infty) = +\infty$ and satisfies
$$\lim_{\ell\to \chi_0^+} \tau_0(\ell) = T_0\,:=\frac{2\pi}{\sqrt{a c}}\,.$$
Similarly, if we consider system $(E_{\mu})$ with $0 < \mu < a,$ for
$$h > \chi_{\mu}:={\mathcal E}_{\mu}(P_{\mu}) = \min\{{\mathcal E}_{\mu}(x,y): \, x>0, y> 0\},$$
we denote by $\tau_{\mu}(h)$ the minimal period associated to the orbit
$$\Gamma_{\mu}(h):=\{(x,y)\in ({\mathbb R}_0^+)^2\,:\, {\mathcal E}_{\mu}(x,y) = h\}.$$
Also in this case, we have that the map $h\mapsto \tau_{\mu}(h)$ is strictly increasing
with $\tau_{\mu}(+\infty) = +\infty$ and
$$\lim_{h\to \chi_{\mu}^+} \tau_{\mu}(h) = T_{\mu}\,:=\frac{2\pi}{\sqrt{a_{\mu} c_{\mu}}}\,.$$

\medskip
Before giving the details for the proof of our main result, we
describe conditions on the energy level lines of  two annuli
$\mathcal A_P$ and $\mathcal A_Q,$ centered at $P=P_0= \left(
\frac{c}{d},\frac{a}{b}\right)$ and $Q=P_{\mu}= \left( \frac{c +
\mu}{d},\frac{a - \mu}{b}\right),$ respectively, sufficient to
ensure that they are linked together. With this respect, we have
to consider the intersections among the closed orbits around the
two equilibria and the straight line $r$ passing through the
points $P$ and $Q,$ whose equation is $by+dx-a-c=0.$ We introduce an
orientation on such line by defining an order ``$\preceq$'' among its points.
More precisely, we set $A\preceq B$ (resp. $A\prec B$) if
and only if $x_A \leq x_B$ (resp. $x_A < x_B$), where $A=(x_A,y_A),\, B=(x_B,y_B).$ In this manner,
the order on $r$ is that inherited from the oriented $x$--axis, by projecting the points of $r$
onto the abscissa.
Assume now we
have two closed orbits $\Gamma_0(\ell_1)$ and $\Gamma_0(\ell_2)$
for system $(E_0),$ with $\chi_0<\ell_1<\ell_2.$ Let's call the
intersection points among $r$ and such level lines
$P_{1,-}\,,P_{1,+}\,,$ with reference to $\ell_1,$ and $P_{2,-}\,,P_{2,+}\,,$ with reference to
$\ell_2,$ with
$$P_{2,-}\,\prec\, P_{1,-}\, \prec \,P \prec \,P_{1,+}\, \prec\, P_{2,+}\,.$$
Analogously, when we consider two orbits $\Gamma_{\mu}(h_1)$ and
$\Gamma_{\mu}(h_2)$ for system $(E_{\mu}),$ with
$\chi_{\mu}<h_1<h_2,$ we name the intersection points among $r$ and
these level lines $Q_{1,-}\,,Q_{1,+},$ with reference to $h_1,$ and
$Q_{2,-}\,,Q_{2,+},$ with reference to $h_2,$ with
$$Q_{2,-}\,\prec\, Q_{1,-}\, \prec\, Q \prec\, Q_{1,+}\, \prec \,Q_{2,+}\,.$$
Then the two annuli $\mathcal
A_P$ and $\mathcal A_Q$ turn out to be linked together if
$$P_{2,-}\, \prec\, P_{1,-}\, \preceq\, Q_{2,-}\, \prec\, Q_{1,-}\, \preceq \,P_{1,+}\,
\prec\, P_{2,+}\, \preceq \,Q_{1,+}\, \prec\, Q_{2,+}\,.$$
\smallskip

\noindent {\textit{Proof of Theorem \ref{th-1.2}.}} Consistently
with the notation introduced above, we indicate the level lines
filling $\mathcal A_P$  by $\Gamma_0(\ell),$ with $\ell\in
[\ell_1,\ell_2],$ for some $\chi_0<\ell_1<\ell_2,$ so that
$${\mathcal A}_P=\bigcup_{\ell_1\le\ell\le\ell_2}\Gamma_0(\ell).$$
Analogously,
we'll denote by $\Gamma_{\mu}(h),$ with $h\in [h_1,h_2],$ for some
$\chi_{\mu}<h_1<h_2,$ the level lines filling $\mathcal A_Q,$ so that
we can write
$${\mathcal A}_Q=\bigcup_{h_1\le h\le h_2}\Gamma_{\mu}(h).$$
By construction, such annular regions turn out to be invariant for
the dynamical systems generated by $(E_0)$ and $(E_{\mu}),$
respectively. We consider now the two regions in which each
annulus is cut by the line $r$ (passing through $P$ and $Q$) and
we call such sets $\mathcal A_P^t,\,\mathcal A_P^b,\,\mathcal
A_Q^t$ and $\mathcal A_Q^b,$ in order to have ${\mathcal
A_P}=\mathcal A_P^t\cup \mathcal A_P^b$ and ${\mathcal
A_Q}=\mathcal A_Q^t\cup \mathcal A_Q^b,$ where the sets denoted by
$t$ are the ``upper'' ones and the sets denoted by $b$ are the
``lower'' ones, with respect to the line $r.$ We'll also name
$\mathcal R^b$ the rectangular region in which $\mathcal A_P^b$
and $\mathcal A_Q^b$ meet and analogously we'll denote by
$\mathcal R^t$
the rectangular region belonging to the intersection between $\mathcal A_P^t$ and $\mathcal A_Q^t$
(see Figure 4).

\begin{figure}[ht]\label{fig-4}
\centering
\includegraphics[scale=0.25]{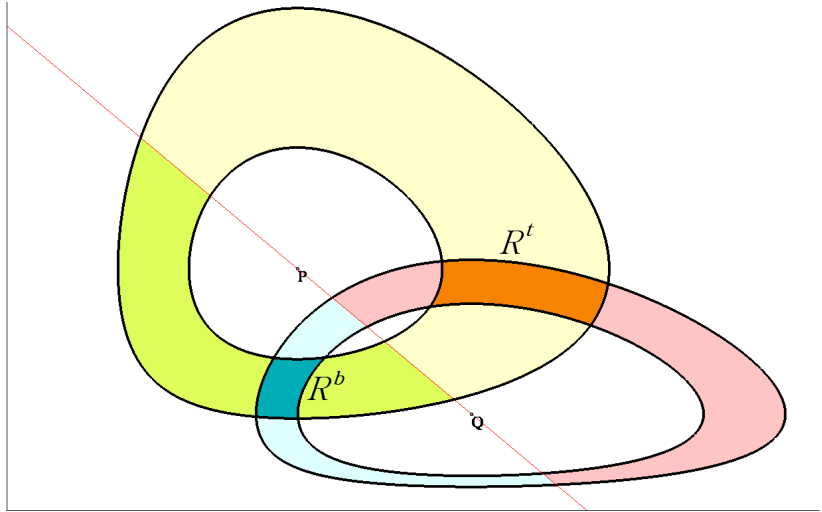}
\caption{\footnotesize For the two linked annuli in the picture (one around
$P$ and the other around $Q$), we have drawn in
different colors the upper and lower
parts (with respect to the dashed line $r$), as well as the intersection regions ${\mathcal R}^t$
and ${\mathcal R}^b$ between them.
As a guideline for the proof, we recall that ${\mathcal A}_P$ is the annulus around $P,$ having as its inner and outer
boundaries the energy level lines $\Gamma_0(\ell_1)$ and $\Gamma_0(\ell_2).$
Similarly, ${\mathcal A}_Q$ is the annulus around $Q,$ having as inner and outer
boundaries the energy level lines $\Gamma_{\mu}(h_1)$ and $\Gamma_{\mu}(h_2).$
}
\end{figure}

\bigskip
\noindent
Let
$$m_1\geq 2\,\quad\mbox{and }\; m_2\geq 1$$
be two fixed integers (the case $m_1 = 1$ and $m_2\geq 2$
can be treated in a similar manner and therefore is omitted).

\noindent
As a first step, we are interested in the solutions of system
$(E_0)$ starting from $\mathcal A_P^b$  and crossing $\mathcal
A_P^t$ at least $m_1$ times. After having performed the
rototraslation of the plane $\mathbb R^2$ that brings the origin
to the point $P$ and makes the $x$--axis coincide with the line
$r,$ whose equations are
\begin{equation*}
\left\{
\begin{array}{ll}
\tilde{x}= (x- \frac{c}{d}) \cos\omega+(y-\frac{a}{b}) \sin\omega\\
\tilde{y}= (\frac{c}{d}-x) \sin\omega+(y-\frac{a}{b}) \cos\omega ,\\
\end{array}
\right.
\end{equation*}
where $\omega:= \arctan(\frac{d}{b}),$ it is possible to use the
Pr\"ufer transformation and introduce generalized polar
coordinates, so that we can express the solution $\zeta(\cdot,z) =
(x(\cdot,z),y(\cdot,z))$ of system $(E_0)$ with initial point in
$z=(x_0,y_0)\in\mathcal A_P^b$ through the radial coordinate
$\rho(t,z)$ and the angular coordinate $\theta(t,z).$ Therefore we
can assume that $\theta(0,z)\in [-\pi,0].$ For any $t\in
[0,r_0]$ and $z\in\mathcal A_P^b,$ let's introduce also the
\textit{rotation number}, that is, the quantity
$$\rot_0(t,z):=\frac{\theta (t,z)-\theta (0,z)}{2\pi}\,,$$
that indicates the normalized angular displacement along the orbit
of system $(E_0)$ starting at $z,$ during the time--interval
$[0,t].$
The continuous dependence of the solutions from the initial data implies that the
function $(t,z)\mapsto \theta(t,z)$ is continuous, as well as
the map $(t,z)\mapsto \rot_0(t,z).$ From the definition of rotation number
and by the star--shapedness of the level lines of ${\mathcal E}_0$
with respect to the point $P,$ we have that for every $z\in \Gamma_0(\ell)$ the following
properties hold:

$$
\begin{array}{ll}
\forall \, j\in {\mathbb Z}\,:\;\; \rot_0(t,z) = j \, &\Longleftrightarrow  \, t = j \, \tau_0(\ell),\\
\forall \, j\in {\mathbb Z}\,:\;\; j < \rot_0(t,z) < j+1 \, &\Longleftrightarrow \, j \,\tau_0(\ell) < t <
(j + 1)\, \tau_0(\ell)
\end{array}
$$
(if the annuli were not star--shaped, the inference `` $\Longleftarrow $ '' would still be true).
Although we have implicitly assumed that $\ell_1 \leq \ell \leq \ell_2\,,$ such properties hold
for every $\ell > \chi_0\,.$

\smallskip

\noindent
Observe that, thanks to the fact that the time--map $\tau_0$ is strictly
increasing, we know that $\tau_0(\ell_1)<\tau_0(\ell_2).$ We shall use this
condition to show that a twist property for the rotation number holds for sufficiently
large time--intervals. Indeed,
we claim that if we choose a switching time $r_0\ge\alpha,$ where
\begin{equation}\label{alpha}
\alpha:=\frac{(m_1+ 3 + \tfrac{1}{2})\,\tau_0(\ell_1)\,\tau_0(\ell_2)}{\tau_0(\ell_2)-\tau_0(\ell_1)}\,,
\end{equation}
then, for any path $\gamma:[0,1]\to {\mathcal A_P},$ with
$\gamma(0)\in\Gamma_0(\ell_1)$ and $\gamma(1)\in\Gamma_0(\ell_2),$ the following interval inclusion is fulfilled:
\begin{equation}\label{eq-3n*}
[\theta(r_0,\gamma(1)),\theta(r_0,\gamma(0))]\supseteq [2\pi
n^*,2\pi(n^*+m_1)-\pi],\,\mbox{for some } n^*=n^*(r_0)\in {\mathbb N}.
\end{equation}
To check our claim, at first we note that for
a path $\gamma(s)$ as above, it holds that
$\rot_0(t,\gamma(0)) \geq \lfloor t/\tau_0(\ell_1)\rfloor$ and
$\rot_0(t,\gamma(1)) \leq \lceil t/\tau_0(\ell_2)\rceil,$ for every $t> 0$
and so
$$\rot_0(t,\gamma(0)) - \rot_0(t,\gamma(1)) > t\, \frac{\tau_0(\ell_2) - \tau_0(\ell_1)}{\tau_0(\ell_1)\,\tau_0(\ell_2)}\, - 2,
\quad
\forall \, t> 0.$$
Hence, for $t\geq \alpha,$ with $\alpha$ defined as in \eqref{alpha}, we obtain
$$\rot_0(t,\gamma(0))> m_1+1+ \tfrac{1}{2} + \rot_0(t,\gamma(1))\,,$$
which, in turns, implies
$$\theta(t,\gamma(0)) - \theta(t,\gamma(1)) > 2 \pi(m_1 + 1),\quad\forall\, t\geq \alpha.$$
Therefore, recalling the bound
$2\pi(\lceil t/\tau_0(\ell_2)\rceil  - \tfrac{3}{2} ) < \theta(t,\gamma(1)) \leq 2 \pi\lceil t/\tau_0(\ell_2)\rceil,$
the interval inclusion \eqref{eq-3n*} is achieved, for
\begin{equation}\label{star}
n^* = n^*(r_0):= \left\lceil \frac{r_0}{\tau_0(\ell_2)}\right\rceil.
\end{equation}
This proves our claim.

\medskip
By the continuity of the composite mapping $[0,1]\ni
s\mapsto \rot_0(r_0,\gamma(s)),$
it follows now that
$$\{\theta
(r_0,\gamma(s)),\,s\in [0,1]\}\supseteq[2\pi
n^*,2\pi(n^*+m_1 -1)+\pi].$$
As a consequence, by the Bolzano theorem, there exist $m_1$ pairwise
disjoint maximal intervals $[t_i{'},t_i{''}]\subseteq [0,1],$
for $i=0,\dots,m_1 - 1,$ such that
$$\theta(r_0,\gamma(s))\in [2\pi n^*+2\pi i,2\pi n^*+\pi+2\pi i],\,\forall s\in [t_i{'},t_i{''}],\, i=0,\dots,m_1 -1,$$
and $\theta(r_0,\gamma(t_i{'}))=2\pi n^*+2\pi i,$ as well as
$\theta(r_0,\gamma(t_i{''}))=2\pi n^*+\pi+2\pi i.$ Setting now
$$\mathcal R_1:=\mathcal R^b\quad\mbox{and } \; \mathcal R_2:=\mathcal R^t,$$
we
orientate such rectangular regions by choosing
$${\mathcal
R}^-_{1,\,{left}}:=\mathcal R_1\cap\Gamma_0(\ell_1)\quad \mbox{and }\;
{\mathcal R}^-_{1,\,{right}}:=\mathcal R_1\cap\Gamma_0(\ell_2),$$
as well as
$${\mathcal R}^-_{2,\,{left}}:=\mathcal
R_2\cap\Gamma_\mu(h_1)\quad \mbox{and } \; {\mathcal
R}^-_{2,\,{right}}:=\mathcal R_2\cap\Gamma_\mu(h_2)$$
(see the caption of Figure 4, as a reminder for the corresponding sets).
\\
Introducing
at last the $m_1$ nonempty and pairwise disjoint compact sets
$$\mathcal H_i:=\{z\in\mathcal A_P^b: \theta(r_0,z)\in [2\pi n^*+2\pi
i,2\pi n^*+\pi+2\pi i]\},\,i=0,\dots,m_1 - 1,$$
we are ready to prove
that
\begin{equation}\label{eq-3.1}
(\mathcal H_i,\phi_0):\widetilde{\mathcal R_1}\stretchx\widetilde{\mathcal R_2},\quad i=0,\dots,m_1-1,
\end{equation}
where we recall that $\phi_0$ is the Poincar\'e map associated to
system $(E_0).$ Indeed, let's take a path $\gamma:[0,1]\to\mathcal
R_1,$  with $\gamma(0)\in {\mathcal R}^-_{1,\,{left}}$ and
$\gamma(1)\in {\mathcal R}^-_{1,\,{right}}.$ For
$r_0\geq\alpha$ and fixing $i\in \{0,\dots,m_1-1\},$
we know that there exists a sub--interval
$[t_i{'},t_i{''}]\subseteq[0,1],$ such that
$\gamma(t)\in\mathcal H_i$ and $\phi_0(\gamma(t))\in \mathcal
A_P^t,\, \forall t\in [t_i{'},t_i{''}].$ Noting now that
$\Gamma_\mu(\phi_0(\gamma(t_i{'})))\le h_1$ and
$\Gamma_\mu(\phi_0(\gamma(t_i{''})))\ge h_2,$ it follows that
there exists a sub--interval
$[t_i^{*},t_i^{**}]\subseteq[t_i{'},t_i{''}]$ such that
$\phi_0(\gamma(t))\in\mathcal R_2,\,\forall t\in
[t_i^{*},t_i^{**}]$ and $\phi_0(\gamma(t_i^{*}))\in{\mathcal
R}^-_{2,\,{left}},$ as well as
$\phi_0(\gamma(t_i^{**}))\in{\mathcal R}^-_{2,\,{right}}.$
Therefore condition (\ref{eq-3.1}) is fulfilled.

\smallskip

Let's turn to system $(E_\mu).$ This time we focus our
attention on the solutions of such system starting from $\mathcal
A_Q^t$  and crossing $\mathcal A_Q^b$ at least $m_2$ times.
Similarly as before, we assume to have performed a rototraslation
of the plane that makes the $x$--axis coincide with the line $r$
and that brings the origin to the point $Q,$ so that we can
express the solution $\zeta(\cdot,w)$ of system $(E_{\mu})$ with
starting point in $w\in\mathcal A_Q^t$ through polar coordinates
$(\tilde\rho,\tilde\theta).$ In particular it holds that
$\tilde\theta(0,w)\in[0,\pi].$ For any $t\in [0,r_\mu]=[0,T-r_0]$
and $w\in\mathcal A_Q^t,$ the rotation number is now defined as
$$\rot_\mu(t,w):=\frac{\tilde\theta (t,w)-\tilde\theta (0,w)}{2\pi}.$$
Since the time--map $\tau_{\mu}$ is strictly increasing, it follows that
$\tau_\mu(h_1)<\tau_\mu(h_2).$ We claim that choosing a swithching time
$r_\mu\ge\beta,$ with
\begin{equation}\label{beta}
\beta:=\frac{(m_2+3 +\tfrac{1}{2})\,\tau_\mu(h_1)\,\tau_\mu(h_2)}{\tau_\mu(h_2)-\tau_\mu(h_1)}\,,
\end{equation}
then, for any path $\sigma:[0,1]\to {\mathcal A_Q},$
with $\sigma(0)\in\Gamma_\mu(h_1)$ and $\sigma(1)\in\Gamma_\mu(h_2),$
the following interval inclusion is satisfied:
\begin{equation}\label{incl}
[\tilde\theta(r_\mu,\sigma(1)),\tilde\theta(r_\mu,\sigma(0))]\supseteq [\pi(
2 n^{**}+1),2\pi(n^{**}+m_2)],\mbox{for some } n^{**}=n^{**}(r_\mu)\in {\mathbb N}.
\end{equation}
The claim can be proved with arguments analogous to the previous ones and therefore its verification is omitted.
The nonnegative integer $n^{**}$ has now to be chosen as
\begin{equation}\label{star2}
n^{**}:=\left\lceil \frac{r_\mu}{\tau_\mu(h_2)}\right\rceil\,.
\end{equation}

By \eqref{incl} and the continuity of the composite mapping $[0,1]\ni
s\mapsto \rot_\mu({r_\mu},\sigma(s)),$  it follows that
$$\{\tilde\theta ({r_\mu},\sigma(s)),\,s\in [0,1]\}\supseteq[2\pi
n^{**}+\pi,2\pi(n^{**}+m_2)].$$

As a consequence, Bolzano theorem ensures the existence of $m_2$ pairwise
disjoint maximal intervals $[s_i{'},s_i{''}]\subseteq [0,1],$
for $i=0,\dots,m_2-1,$ such that
$$\tilde\theta({r_\mu},\sigma(s))\in [2\pi n^{**}+\pi+2\pi i,2\pi n^{**}+2 \pi+2\pi i],\,\forall s\in [s_i{'},s_i{''}],\, i=0,\dots,m_2-1,$$
and $\tilde\theta(r_\mu,\sigma(s_i{'}))=2\pi n^{**}+\pi+2\pi i,$ as well as $\tilde\theta(r_\mu,\sigma(s_i{''}))=2\pi n^{**}+2 \pi+2\pi i.$
\\
For $\widetilde{\mathcal R_1}$ and $\widetilde{\mathcal R_2}$ as above and introducing the $m_2$ nonempty,
compact and pairwise disjoint sets
$$\mathcal K_i:=\{w\in\mathcal A_Q^t: \tilde\theta(r_\mu,w)\in [2\pi n^{**}+\pi+2\pi i,2\pi n^{**}+2 \pi+2\pi i]\},\,i=0,\dots,m_2-1,$$
we are in position to check that
\begin{equation}\label{eq-3.2}
(\mathcal K_i,\phi_\mu):\widetilde{\mathcal R_2}\stretchx\widetilde{\mathcal R_1},\,i=0,\dots,m_2-1,
\end{equation}
where $\phi_\mu$ is the Poincar\'e map associated to system $(E_\mu).$
Indeed, taking a path $\sigma:[0,1]\to\mathcal R_2,$ with $\sigma(0)\in {\mathcal R}^-_{2,\,{left}}$
and $\sigma(1)\in {\mathcal R}^-_{2,\,{right}},$ for $r_\mu\ge\beta$ and for any $i\in \{0,\dots,m_2 -1\}$
fixed, there exists a sub--interval $[s_i{'},s_i{''}]\subseteq[0,1],$ such that $\sigma(t)\in\mathcal K_i$
and $\phi_\mu(\sigma(t))\in \mathcal A_Q^b,\, \forall t\in [s_i{'},s_i{''}].$
Since $\Gamma_0(\phi_\mu(\sigma(s_i{'})))\le\ell_1$ and $\Gamma_0(\phi_\mu(\sigma(s_i{''})))\ge\ell_2,$
there exists a sub--interval $[s_i^{*},s_i^{**}]\subseteq[s_i{'},s_i{''}]$
such that $\phi_\mu(\sigma(t))\in\mathcal R_1,\,\forall t\in [s_i^{*},s_i^{**}]$
and $\phi_\mu(\sigma(s_i^{*}))\in{\mathcal R}^-_{1,\,{left}},$ as well as $\phi_\mu(\sigma(s_i^{**}))\in{\mathcal R}^-_{1,\,{right}}.$
Therefore condition (\ref{eq-3.2}) is proved.

The stretching properties in \eqref{eq-3.1} and \eqref{eq-3.2} allow to apply
Lemma \ref{lem-4.1} and the thesis follows immediately.
\qed

\medskip
\noindent
We observe that in our proof we have
chosen as $\mathcal R_1$ the ``lower'' set $\mathcal R^b$ and as
$\mathcal R_2$ the ``upper'' set $\mathcal R^t.$ However, since the orbits of both systems $(E_0)$ and $(E_\mu)$
are closed, the same argument also works (by slightly modifying some constants, if needed)
if we choose
${\mathcal R}_1 = {\mathcal R}^t$ and
${\mathcal R}_2 = {\mathcal R}^b.$

\section{Topological tools and remarks about chaotic dynamics}\label{sec-3}
In this last section, we briefly recall the topological tools that we have employed along the paper.
They are taken, with minor variants, from \cite{PaPiZa->} and are based on some previous works
\cite{PaZa-a04b,PaZa-a04a,PaZa-08,PiZa->}.
We start with some terminology: \\
Given a metric space $Z,$ we define a \textit{path} in $Z$ as a
continuous map $\gamma: {\mathbb R}\supseteq [0,1]\to Z$
(instead of $[0,1]$ one could take any compact interval $[s_0,s_1]$).
A \textit{sub--path}
$\sigma$ of $\gamma$ is the restriction of $\gamma$ to a
compact sub--interval of its domain. An \textit{arc} is the homeomorphic image of the
compact interval $[0,1].$
By a \textit{generalized rectangle} we mean a set
${\mathcal R}\subseteq Z$ which is homeomorphic to the unit square ${\mathcal Q}:=[0,1]^2\subseteq {\mathbb R}^2.$

If ${\mathcal R}$ is a generalized rectangle and $h: {\mathcal Q}\to h({\mathcal Q})={\mathcal R}$
is a homeomorphism defining it, we call \textit{contour} $\vartheta{\mathcal R}$ of
${\mathcal R}$ the set
$$\vartheta {\mathcal R}:= h(\partial{\mathcal Q}),$$
where $\partial{\mathcal Q}$ is the usual boundary of the unit square. Notice that $\vartheta{\mathcal R}$
is well defined, as it is independent of the choice of the homeomorphism $h.$ We name \textit{oriented rectangle}
the pair
$${\widetilde{\mathcal R}}:=({\mathcal R},{\mathcal R}^-),$$
where ${\mathcal R} \subseteq Z$ is a generalized rectangle and
$${\mathcal R}^- := {\mathcal R}^-_{left}\cup {\mathcal R}^-_{right}$$
is the union of two disjoint compact arcs
${\mathcal R}^-_{left}\,,{\mathcal R}^-_{right}\,\subseteq \vartheta{\mathcal R},$
that we call the \textit{left} and the \textit{right} sides of ${\mathcal R}^-.$
Once that ${\mathcal R}^-$ is fixed, we can also define ${\mathcal R}^+$ as the
closure of $\vartheta {\mathcal R}\setminus {\mathcal R}^-.$ In particular, we set
$${\mathcal R}^+ := {\mathcal R}^+_{down}\cup {\mathcal R}^+_{up}\,,$$
where ${\mathcal R}^+_{down}$ and ${\mathcal R}^+_{up}$ are two disjoint arcs.
In the usual applications, the ambient space $Z$ is just the Euclidean plane and
the generalized rectangles are bounded by some orbit--segments, possibly associated to
different systems.

The central concept in our approach is that of ``stretching along the paths'':

\begin{definition}\label{def-4.1}
{\rm{Suppose that $\psi: Z \supseteq D_{\psi}\to Z$
is a map defined on a set $D_{\psi}$ and let
${\widetilde{X}}:= ({X},{X}^-)$
and
${\widetilde{Y}}:= ({Y},{Y}^-)$
be oriented rectangles of a metric space $Z.$
Let
${\mathcal K}\subseteq {X}\cap D_{\psi}$ be a compact set.
We say that \textit{$({\mathcal K},\psi)$ stretches ${\widetilde{X}}$
to ${\widetilde{Y}}$ along the paths} and write
$$({\mathcal K},\psi): {\widetilde{X}} \stretchx {\widetilde{Y}},$$
if the following conditions hold:
\begin{itemize}
\item{} \; $\psi$ is continuous on ${\mathcal K}\,;$
\item{} \;
for every path $\gamma: [0,1]\to {X}$ such that $\gamma(0)\in {X}^-_{left}$
and $\gamma(1)\in {X}^-_{right}$
(or $\gamma(0)\in {X}^-_{right}$
and $\gamma(1)\in {X}^-_{left}$),
there exists a sub--interval $[t',t'']\subseteq [0,1]$
such that
$$\gamma(t)\in {\mathcal K},\quad \psi(\gamma(t))\in {Y}\,,\;\;
\forall\, t\in [t',t'']$$
and, moreover, $\psi(\gamma(t'))$ and $\psi(\gamma(t''))$ belong to different components of
${Y}^-.$
\end{itemize}

\medskip

\noindent
In the special case in which ${\mathcal K} = X,$
we simply write
$$\psi: {\widetilde{X}} \stretchx {\widetilde{Y}}.$$
}}
\end{definition}

We are now in position to introduce the result about the existence of chaotic dynamics for linked twist maps
that we apply in the proof of Theorem \ref{th-1.2}.
\begin{lemma}\label{lem-4.1}
Let $Z$ be a metric space, let $\Phi: Z\supseteq D_{\Phi}\to Z$ and
$\Psi: Z\supseteq D_{\Psi}\to Z$ be continuous maps and let
${\widetilde{\mathcal R_1}} := ({\mathcal R_1},{\mathcal R_1}^-),$
${\widetilde{\mathcal R_2}} := ({\mathcal R_2},{\mathcal R_2}^-)$ be oriented rectangles in $Z.$
Suppose that the following conditions are satisfied:
\begin{itemize}
\item[$(H_{\Phi})\;\;$] there exist $m_1\geq 1$ pairwise disjoint compact sets
${\mathcal H}_1\,,\dots, {\mathcal H}_{m_1}\,\subseteq {\mathcal R_1}\cap D_{\Phi}$
such that
$\displaystyle{({\mathcal H}_i,\Phi): {\widetilde{\mathcal R_1}}
\stretchx\, {\widetilde{\mathcal R_2}}},$ for $i=1,\dots,m_1\,;$
\\
\item[$(H_{\Psi})\;\;$] there exist $m_2\geq 1$ pairwise disjoint compact sets
${\mathcal K}_1\,,\dots, {\mathcal K}_{m_2}\,\subseteq {\mathcal R_2}\cap D_{\Psi}$
such that
$\displaystyle{({\mathcal K}_i,\Psi): {\widetilde{\mathcal R_2}}
\stretchx\, {\widetilde{\mathcal R_1}}},$ for $i=1,\dots,m_2\,.$
\end{itemize}
If at least one between $m_1$ and $m_2$ is greater or equal than $2,$
then the composite map $\psi:=\Psi\circ \Phi$ induces chaotic dynamics on $m_1 \times m_2$ symbols
in the set
$${\mathcal H}^*:=
\bigcup_{i=1,\dots,m_1\atop j=1,\dots,m_2} {\mathcal H}'_{i,j},\,\quad\mbox{where }\;
{\mathcal H}'_{i,j}\,:= {\mathcal H}_i\cap \Phi^{-1}({\mathcal K}_j).$$
Moreover,
for each sequence of $m_1\times m_2$ symbols
$$\textbf{s}=(s_n)_n=(p_n,q_n)_n\in
\{1,\dots,m_1\}^{\mathbb N}\times  \{1,\dots,m_2\}^{\mathbb N},$$
there exists a compact connected set ${\mathcal C}_{\textbf{s}}\subseteq {\mathcal H}'_{p_0,q_0}$
with
$$
{\mathcal C}_{\textbf{s}}\cap {{\mathcal R}}^+_{1,down}\not=\emptyset,
\quad{\mathcal C}_{\textbf{s}}\cap {{\mathcal R}}^+_{1,up}\not=\emptyset
$$
and such that,
for every $w\in {\mathcal C}_{\textbf{s}}\,,$
there exists a sequence $(y_n)_{n}$ with $y_0 = w$ and
$$y_n \in {\mathcal H}'_{p_n,q_n}\,,\quad \psi(y_n) = y_{n+1}\,,\;\forall\, n\geq 0.$$
\end{lemma}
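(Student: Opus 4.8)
The plan is to reduce this two--map statement to the already available single--map theory. Concretely, I would first show that the composite map $\psi=\Psi\circ\Phi$ stretches the single oriented rectangle $\widetilde{\mathcal R_1}$ to itself along the paths, relative to each of the $m_1\times m_2$ sets $\mathcal H'_{i,j}=\mathcal H_i\cap\Phi^{-1}(\mathcal K_j)$, and then invoke the basic result of \cite{PaPiZa->}: a single continuous map which stretches an oriented rectangle to itself along the paths, relative to $N\geq 2$ pairwise disjoint compact sets, induces chaotic dynamics on $N$ symbols in the sense of Definition \ref{def-1.1} (with the attached statement on the cutting continua). Since at least one of $m_1,m_2$ is $\geq 2$ and the other is $\geq 1$, we have $N=m_1 m_2\geq 2$, so this reduction is legitimate.

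Before the stretching argument I would record two routine facts. Each $\mathcal H'_{i,j}$ is compact: $\mathcal H_i$ is compact and, $\Phi$ being continuous on $\mathcal H_i$ by $(H_\Phi)$, the set $\{z\in\mathcal H_i:\Phi(z)\in\mathcal K_j\}$ is closed in $\mathcal H_i$, hence compact. They are pairwise disjoint: if $i\neq i'$ then already $\mathcal H_i\cap\mathcal H_{i'}=\emptyset$, while if $i=i'$ and $j\neq j'$ then $\Phi^{-1}(\mathcal K_j)\cap\Phi^{-1}(\mathcal K_{j'})=\Phi^{-1}(\mathcal K_j\cap\mathcal K_{j'})=\emptyset$. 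Finally $\psi=\Psi\circ\Phi$ is continuous on $\mathcal H'_{i,j}$, since $\Phi$ is continuous there with image contained in $\mathcal K_j$, on which $\Psi$ is continuous by $(H_\Psi)$.

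The core step is to establish $(\mathcal H'_{i,j},\psi):\widetilde{\mathcal R_1}\stretchx\widetilde{\mathcal R_1}$ for every pair $(i,j)$. I would take an arbitrary path $\gamma:[0,1]\to\mathcal R_1$ joining the two components of $\mathcal R_1^-$. Applying $(H_\Phi)$ with the index $i$ produces a subinterval $[t',t'']$ on which $\gamma$ lies in $\mathcal H_i$ and whose image $\Phi\circ\gamma|_{[t',t'']}$ is a path in $\mathcal R_2$ joining the two components of $\mathcal R_2^-$. Feeding this image path into $(H_\Psi)$ with the index $j$ yields a further subinterval $[s',s'']\subseteq[t',t'']$ on which $\Phi\circ\gamma$ lies in $\mathcal K_j$ and whose image under $\Psi$, namely $\psi\circ\gamma$, joins the two components of $\mathcal R_1^-$. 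On $[s',s'']$ one has simultaneously $\gamma(s)\in\mathcal H_i$ and $\Phi(\gamma(s))\in\mathcal K_j$, i.e.\ $\gamma(s)\in\mathcal H'_{i,j}$, together with $\psi(\gamma(s))\in\mathcal R_1$ and the two endpoints lying in different components of $\mathcal R_1^-$; this is exactly the required relation. I would stress that Definition \ref{def-4.1} admits either orientation of the endpoints, so no case distinction is needed on which side $\gamma(0)$ starts from, and this orientation bookkeeping through the two successive applications is the one place where care is genuinely required.

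With the stretching of $\psi$ over the disjoint family $\{\mathcal H'_{i,j}\}$ in hand, the conclusion is immediate from the single--map theorem of \cite{PaPiZa->}: it furnishes the chaotic dynamics on $N=m_1 m_2$ symbols in $\mathcal H^*=\bigcup_{i,j}\mathcal H'_{i,j}$ in the sense of Definition \ref{def-1.1}, including the preservation of periodicity, and, for each one--sided itinerary $\mathbf{s}=(p_n,q_n)_n$, the compact connected set $\mathcal C_{\mathbf{s}}\subseteq\mathcal H'_{p_0,q_0}$ meeting both $\mathcal R^+_{1,down}$ and $\mathcal R^+_{1,up}$ and carrying from each of its points a forward orbit with the prescribed symbolic history. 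The only genuine obstacle, as indicated above, is the verification of the composite stretching; everything else is either a point--set check or a direct appeal to the cited one--map result.
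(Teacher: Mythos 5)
Your proposal is correct and follows the standard route: the paper itself states Lemma \ref{lem-4.1} without proof, importing it from \cite{PaPiZa->}, and the argument there is exactly your reduction --- verify that each $\mathcal H'_{i,j}$ is a nonempty compact set with $(\mathcal H'_{i,j},\psi):\widetilde{\mathcal R_1}\stretchx\widetilde{\mathcal R_1}$ by chaining the two stretching hypotheses through nested subintervals, then invoke the one--map stretching theorem on $m_1m_2\geq 2$ disjoint sets. The composite--stretching step and the point--set checks are all sound, so nothing further is needed.
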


\noindent
Note that in Theorem \ref{th-1.2}
we haven't used the last part of the previous lemma.
Therefore it would be possible to improve our main result on Lotka--Volterra systems
by adding information about the existence of continua of points for which we have a control on the forward itineraries.

\medskip

We end the paper by trying to clarify the relationship between the concept of chaos expressed
in Definition \ref{def-1.1} and other ones available in the literature, with special reference to the
semiconjugation with the Bernoulli shift and the density of the periodic points.
Before giving the next lemmas we just recall
some basic definitions. We denote by
$\Sigma_m = \{1,\dots, m\}^{\mathbb Z}$
the set
of the two--sided sequences of $m$ symbols. Analogously, by
$\Sigma_m ^+= \{1,\dots, m\}^{\mathbb N}$
we mean the set of one--sided sequences of $m$ symbols (where ${\mathbb N}$ is the set of nonnegative integers).
Introducing the standard distance
\begin{equation}\label{eq-3c.dist}
d(\textbf{s}', \textbf{s}'') := \sum_{i\in {\mathbb I}} \frac{|s'_i - s''_i|}{m^{|i| + 1}}\,,\quad
\mbox{ where }\; \textbf{s}'=(s'_i)_{i\in {\mathbb I}}\,,\,
\textbf{s}''=(s''_i)_{i\in {\mathbb I}}\,,
\end{equation}
for ${\mathbb I} = {\mathbb Z}$ and for ${\mathbb I} = {\mathbb N},$
we make $\Sigma_m$ and $\Sigma_m^+$ compact metric spaces.

\medskip

Our first result is quite classical.
Indeed, the same conclusions of Lemma \ref{lem-4.2}
have been obtained by several different authors dealing with similar situations
(see, for instance, \cite[Theorem 3]{Sr-00}). Nonetheless, we give a detailed proof for
the reader's convenience. We also thank Duccio Papini for useful discussions about this
topic.

\begin{lemma}\label{lem-4.2}
Let $(Z,d_Z)$ be a metric space,  $\psi: Z\supseteq D_{\psi} \to Z$ be a map
which induces chaotic dynamics on $m\geq 2$ symbols in a set ${\mathcal D}\subseteq D_{\psi}$\,, relatively to
$({\mathcal K}_1,\dots,{\mathcal K}_{m})$ (according to Definition \ref{def-1.1}).
Assume also that $\psi$ is continuous and injective on
$${\mathcal K}:=\bigcup_{j=1}^{m} {\mathcal K}_j\,.$$
Define the nonempty compact set
\begin{equation}\label{eq-4.2}
{\mathcal I}:= \bigcap_{j=-\infty}^{+\infty} \psi^{-j}({\mathcal K}).
\end{equation}
Then ${\mathcal I}$ is invariant for $\psi$ and
$\psi|_{\mathcal I}$ is semiconjugate to the two--sided
$m$--shift, through a continuous surjection $g:\mathcal
I\to\Sigma_m$ as in \eqref{diag-1}. Moreover, the counterimage through
$g$ of any $k$--periodic sequence in $\Sigma_m$ contains at least
one $k$--periodic point of ${\mathcal I}.$
\end{lemma}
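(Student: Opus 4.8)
The plan is to realize $\mathcal I$ as the maximal invariant set of $\psi$ inside $\mathcal K$ and to read off the itinerary of each of its points. First I would record the structural facts on which everything rests. Since $\mathcal K=\bigcup_{j=1}^m\mathcal K_j$ is compact and $\psi$ is continuous and injective on it, the restriction $\psi|_{\mathcal K}$ is a homeomorphism onto its image $\psi(\mathcal K)$ (a continuous injection from a compact space into a metric space is a homeomorphism onto its image); I denote by $\eta:=(\psi|_{\mathcal K})^{-1}$ its continuous inverse. Because the $\mathcal K_j$ are pairwise disjoint compact sets, the label map $\pi:\mathcal K\to\{1,\dots,m\}$, $\pi(x)=j\Leftrightarrow x\in\mathcal K_j$, is well defined and continuous (each $\mathcal K_j$ is relatively clopen in $\mathcal K$). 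Reading \eqref{eq-4.2} as the set of points whose full two-sided orbit stays in $\mathcal K$, I would check compactness by writing $\mathcal I$ as the intersection over $n$ of the sets of points admitting a forward, resp. backward, orbit of length $n$ in $\mathcal K$; each of these is closed (the forward ones as iterated preimages of the closed set $\mathcal K$ under the continuous $\psi$, the backward ones as continuous images of compact subsets of $\mathcal K^{n+1}$), so $\mathcal I$ is a closed subset of the compact set $\mathcal K$. Nonemptiness and the invariance $\psi(\mathcal I)=\mathcal I$ then follow at once: applying Definition \ref{def-1.1} to any sequence produces an orbit $(w_i)_{i\in\mathbb Z}$ lying in $\mathcal K$, so $w_0\in\mathcal I$, while shifting a bi-infinite $\mathcal K$-orbit forward or backward by one step yields $\psi(\mathcal I)\subseteq\mathcal I$ and $\mathcal I\subseteq\psi(\mathcal I)$.

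Next I would define the semiconjugacy. By injectivity of $\psi$ on $\mathcal K$, every $x\in\mathcal I$ has a unique bi-infinite orbit $(x_i)_{i\in\mathbb Z}\subseteq\mathcal K$ with $x_0=x$ and $\psi(x_i)=x_{i+1}$, given forward by $x_i=\psi^i(x)$ and backward by $x_{-i}=\eta^i(x)$. I set $g(x):=(\pi(x_i))_{i\in\mathbb Z}\in\Sigma_m$. The commutation $g\circ\psi=\sigma\circ g$ of diagram \eqref{diag-1} is immediate, since the orbit of $\psi(x)=x_1$ is that of $x$ shifted by one, so $g(\psi(x))=(\pi(x_{i+1}))_i=\sigma(g(x))$. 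Surjectivity is exactly the content of the first part of Definition \ref{def-1.1}: given $\mathbf s=(s_i)_{i\in\mathbb Z}\in\Sigma_m$, the orbit $(w_i)$ supplied by \eqref{eq-1.ch1} satisfies $w_i\in\mathcal K_{s_i}\subseteq\mathcal K$, hence $w_0\in\mathcal I$ and, by uniqueness of $\mathcal K$-orbits, $(w_i)$ is the canonical orbit of $w_0$, giving $g(w_0)=\mathbf s$.

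The step I expect to be the main obstacle is the continuity of $g$, and precisely the continuity in the backward coordinates. For $i\ge 0$ the coordinate $x\mapsto\pi(\psi^i(x))$ is continuous as a composition of $\psi$ and $\pi$; for $i<0$ one must know that the backward iterate $x\mapsto\eta^{|i|}(x)$ is continuous on $\mathcal I$, which is exactly where the homeomorphism-onto-image property (hence the compactness and injectivity hypotheses) is indispensable. Granting this, each coordinate $g_i:\mathcal I\to\{1,\dots,m\}$ is continuous into a discrete set, hence locally constant; since the metric \eqref{eq-3c.dist} induces the product topology on $\Sigma_m$, continuity of $g$ follows by the standard finite-window argument: given $\varepsilon>0$, choose $N$ so that the tail $\sum_{|i|>N}$ of \eqref{eq-3c.dist} is below $\varepsilon$, and then a $\delta$ forcing $g_i(x)=g_i(y)$ for all $|i|\le N$ whenever $d_Z(x,y)<\delta$.

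Finally, the periodic-point assertion is read directly off the periodic clause of Definition \ref{def-1.1}. If $\mathbf s$ is $k$-periodic, that clause furnishes a $k$-periodic orbit $(w_i)$, i.e. $w_{i+k}=w_i$, with $w_i\in\mathcal K_{s_i}$. Then $w_0\in\mathcal I$, its itinerary is $g(w_0)=\mathbf s$, and $\psi^k(w_0)=w_k=w_0$, so $w_0$ is a $k$-periodic point of $\psi$ lying in $g^{-1}(\mathbf s)\cap\mathcal I$. This establishes the four assertions: $\mathcal I$ is a nonempty compact invariant set, $g$ is a continuous surjective semiconjugacy onto the two-sided $m$-shift, and every $k$-periodic sequence has a $k$-periodic point in its $g$-fiber.
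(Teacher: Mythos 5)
Your proposal is correct and follows essentially the same route as the paper's proof: identify $\mathcal I$ as the set of points whose full orbit stays in $\mathcal K$, use injectivity to get a well-defined itinerary map $g$, obtain surjectivity and the periodic-point claim directly from Definition \ref{def-1.1}, and prove continuity of $g$ via the fact that $\psi$ restricted to the invariant set is a homeomorphism together with a finite-window argument adapted to the metric \eqref{eq-3c.dist}. The only differences are cosmetic (you make the inverse $\eta$ and the compactness of $\mathcal I$ more explicit, where the paper appeals to ``standard properties of compact sets'').
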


\begin{proof}
First of all, we observe that
$w\in {\mathcal I}$ if and only if there exists a full orbit, that is, a two--sided itinerary,
$(w_i)_{i\in{\mathbb Z}}$ such that
$w_0=w$ and $\psi(w_{i-1}) = w_i\in {\mathcal K}$ for every $i\in {\mathbb Z}.$
By the assumptions on $\psi$ coming from Definition \ref{def-1.1}
and standard properties of compact sets,
it follows immediately that ${\mathcal I}$ is a nonempty compact set such that
$\psi({\mathcal I}) = {\mathcal I}.$
\\
As a next step, we introduce a function $g_1\,,$ associating to any $w\in {\mathcal I}$ its corresponding full orbit
$$s_w:= (w_i)_{i\in {\mathbb Z}}\,,$$
that is,
the sequence of points of the set ${\mathcal I}$ defined by
$$w_i:= \psi^{i}(w),\quad\forall\, i\in {\mathbb Z},$$
with the usual convention $\psi^0 = \Id_{\mathcal K}$ and $\psi^1 = \psi.$
The injectivity of $\psi$ implies that the map
$$g_1:w\mapsto s_w$$
is well defined.
Recalling that the ${\mathcal K}_j$'s are
pairwise disjoint, we know that for every term $w_i$ of $s_w$ there exists a
unique label
$$s_i = s_i(w_i),\quad\mbox{with } s_i\in \{1,\dots,m\},$$
such that
$w_i \in {\mathcal K}_{s_i}\,.$
Hence, the map
\begin{equation}\label{eq-g2}
g_2: s_w \mapsto (s_i)_{i\in {\mathbb Z}}\in \Sigma_m
\end{equation}
is also well defined. Thus, if we set
$$g:= g_2\circ g_1: {\mathcal I}\to \Sigma_m,$$
by Definition \ref{def-1.1} we obtain a surjective map that makes the diagram
\eqref{diag-1} commute. Moreover, the inverse image through
$g$ of any $k$--periodic sequence in $\Sigma_m$ contains at least
one $k$--periodic point of ${\mathcal I}.$
\\
Finally, we show that $g$ is continuous. To such aim we call
$$\epsilon:= \min_{1 \leq i\not=j\leq m} d_Z({\mathcal K}_i,{\mathcal K}_j) > 0$$
and note that
the map $\psi|_{\mathcal I}: {\mathcal I}\to \psi({\mathcal I})={\mathcal I}$
is a homeomorphism. Therefore the following property holds:
\begin{itemize}
\item[] $\forall\, n\in {\mathbb N},\; \exists\, \delta = \delta_n> 0:\;$  for
$u,v\in {\mathcal I},$ with $s_u:=(u_i)_{i\in{\mathbb Z}}\,,$
$s_v:=(v_i)_{i\in{\mathbb Z}}\,,$
$$d_Z(u,v) < \delta \Longrightarrow d_Z(u_i,v_i) < \epsilon\,,\;\forall\, |i|\leq n
\Longrightarrow s_i(u_i) = s_i(v_i),\;\forall\, |i|\leq n.$$
\end{itemize}

\noindent From this fact and the choice of the distance in $\Sigma_m$
(see \eqref{eq-3c.dist}), the continuity of $g$ is easily proved.
\end{proof}

The next result is just a more precise version of Lemma \ref{lem-4.2},
with reference to the periodic points.

\begin{lemma}\label{lem-4.3}
Under the same assumptions of Lemma \ref{lem-4.2},
there exists a compact invariant set $\Lambda \subseteq {\mathcal I}$
such that
$\psi|_{\Lambda}$ is semiconjugate to the two--sided
$m$--shift, through the continuous surjection $g|_{\Lambda}$
(where
$g:\mathcal
I\to\Sigma_m$ is like in \eqref{diag-1}).
The set of the periodic points of $\psi|_{\mathcal I}$ is dense in $\Lambda$ and,
moreover, the counterimage through
$g$ of any $k$--periodic sequence in $\Sigma_m$ contains at least
one $k$--periodic point of $\Lambda.$
\end{lemma}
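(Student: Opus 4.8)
The plan is to build the set $\Lambda$ as the closure of the set of periodic points produced by Lemma \ref{lem-4.2}, and then to verify that this closure retains the required properties. First I would recall that, by Lemma \ref{lem-4.2}, for every periodic sequence $\mathbf{s}\in\Sigma_m$ the counterimage $g^{-1}(\mathbf{s})$ contains at least one periodic point of $\psi|_{\mathcal I}$. For each $k$-periodic sequence $\mathbf{s}$ I would select one such $k$-periodic point $w_{\mathbf{s}}\in g^{-1}(\mathbf{s})$, and I would let
$$
P := \{\, w_{\mathbf{s}} : \mathbf{s}\in\Sigma_m \text{ is a periodic sequence}\,\}, \qquad \Lambda := \overline{P}.
$$
Since $\Lambda\subseteq{\mathcal I}$ and ${\mathcal I}$ is compact, $\Lambda$ is compact. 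By construction the periodic points of $\psi|_{\mathcal I}$ are dense in $\Lambda$, which is one of the asserted conclusions, and the statement about counterimages of $k$-periodic sequences is inherited directly from Lemma \ref{lem-4.2} because every such $w_{\mathbf{s}}$ already lies in $\Lambda$.

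The two points still to check are that $\Lambda$ is invariant and that $g|_{\Lambda}$ is a semiconjugacy, i.e. that $g(\Lambda)=\Sigma_m$. For invariance I would observe that each $w_{\mathbf{s}}$ is periodic, so its whole (finite) orbit lies in $P$; consequently $\psi(P)\subseteq P$, and since $\psi|_{\mathcal I}$ is a homeomorphism of the compact set ${\mathcal I}$, applying $\psi$ to the closure gives $\psi(\Lambda)=\psi(\overline{P})=\overline{\psi(P)}\subseteq\overline{P}=\Lambda$; the reverse inclusion follows the same way using $\psi^{-1}$, which is continuous on ${\mathcal I}$, so that $\psi(\Lambda)=\Lambda$. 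For the surjectivity of $g|_{\Lambda}$ onto $\Sigma_m$, I would use density of periodic sequences in $\Sigma_m$ together with continuity of $g$ and compactness. Concretely, given any $\mathbf{t}\in\Sigma_m$, I would approximate it by periodic sequences $\mathbf{s}^{(n)}\to\mathbf{t}$ in the metric \eqref{eq-3c.dist}; the points $w_{\mathbf{s}^{(n)}}\in P$ lie in the compact set $\Lambda$, so a subsequence converges to some $w^*\in\Lambda$, and continuity of $g$ gives $g(w^*)=\lim g(w_{\mathbf{s}^{(n)}})=\lim\mathbf{s}^{(n)}=\mathbf{t}$. Hence $g(\Lambda)=\Sigma_m$, and the commuting diagram for $g|_{\Lambda}$ is just the restriction of \eqref{diag-1}.

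The main obstacle I anticipate is not any single deep step but rather the careful bookkeeping needed to guarantee that the selected periodic points genuinely form a forward- and backward-invariant set before taking closure; in particular one must use that $g$ intertwines $\psi$ with the shift $\sigma$ so that the orbit of $w_{\mathbf{s}}$ maps to the shifted sequences $\sigma^j(\mathbf{s})$, all of which are again periodic of the same period, ensuring $\psi(P)\subseteq P$. A secondary subtlety is that surjectivity of $g|_{\Lambda}$ relies on the density of periodic sequences in $\Sigma_m$ with respect to the distance \eqref{eq-3c.dist}, which is standard but should be invoked explicitly. Once these points are arranged, all four assertions — compactness, invariance, semiconjugacy via $g|_{\Lambda}$, and density of periodic points together with the periodic-point property of counterimages — follow directly from Lemma \ref{lem-4.2} and elementary topology, so the argument is short.
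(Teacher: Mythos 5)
Your proposal is correct and follows essentially the same route as the paper: $\Lambda$ is the closure of a set of periodic points, invariance comes from $\psi$ permuting periodic orbits, and surjectivity of $g|_{\Lambda}$ comes from the density of periodic sequences in $\Sigma_m$ together with compactness of $\Lambda$ and continuity of $g$. The only difference is that the paper takes ${\mathcal P}$ to be the set of \emph{all} periodic points of $\psi|_{\mathcal I}$, which makes $\psi({\mathcal P})={\mathcal P}$ immediate and sidesteps the bookkeeping you flag: with one chosen point per periodic sequence, $\psi(w_{\mathbf{s}})$ need not be the chosen point $w_{\sigma(\mathbf{s})}$, so you must (as you yourself note) enlarge $P$ to contain full orbits before taking the closure.
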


\begin{proof}
For ${\mathcal I}$ defined as in \eqref{eq-4.2}, we consider the
subset ${\mathcal P}$ made of the periodic points of $\psi|_{\mathcal I}\,,$
that is,
$${\mathcal P}:= \{w\in {\mathcal I}: \exists \, k\geq 1, \; \psi^{k}(w) = w\}$$
and define
$$\Lambda:= \overline{\mathcal P}.$$
Since $\psi({\mathcal P}) = {\mathcal P},$
it follows that $\psi(\Lambda) = \Lambda.$ From the last statement in Lemma \ref{lem-4.2} we also find that
$g({\mathcal P})$ coincides with the subset of $\Sigma_m$
made by the two--sided periodic sequences of $m$ symbols, which is dense in $\Sigma_m\,.$
This latter fact implies the surjectivity of $g|_{\Lambda}: \Lambda \to \Sigma_m\,.$
The remaining properties are a straightforward consequence of the corresponding ones in Lemma \ref{lem-4.2}.
\end{proof}

\smallskip

In both the above results, we have required the injectivity of the map $\psi.$
This is not a heavy restriction in the applications to ODEs (like those in Section \ref{sec-1}
and in \cite{PaPiZa->,PaZa-08}), where $\psi$ is the Poincar\'{e} map.
Anyway, in the abstract setting of the metric spaces and with reference to Definition \ref{def-1.1},
it is still possible to obtain some suitable versions of Lemma \ref{lem-4.2} and
Lemma \ref{lem-4.3} for a map $\psi$ which is continuous on ${\mathcal K},$
but without the assumption of injectivity. Such task can be accomplished in different ways. A first possibility is
that of considering a semiconjugation with the Bernoulli shift on $m$ symbols for
one--sided sequences. Indeed, any initial point $w$ uniquely determines the
forward sequence $s^+_{w}:=(\psi^i(w))_{i\in {\mathbb N}}\,,$ to which we can associate
a well determined sequence of symbols $(s_i)_{i\in {\mathbb N}}\in \Sigma_m^+$ as in the definition
of $g_2$ in \eqref{eq-g2}.
Another possibility is that of
considering, in place of $g,$ only the map $g_2$, that is,
to associate to any full orbit $(w_i)_{i\in{\mathbb Z}}\,,$ with $\psi(w_i) = w_{i+1}\,,$
the sequence $(s_i)_{i\in {\mathbb Z}}\in \Sigma_m$ such that $w_i\in {\mathcal K}_{s_i}\,.$
This second approach is followed by Lani-Wayda and Srzednicki in \cite{LWSr-02},
where the authors obtain a variant of Lemma \ref{lem-4.3} for a set $\Lambda$ replaced
by the closure (denoted by ${\mathcal T}$) of the set of the terms of the periodic sequences in ${\mathcal K}$
which are full orbits of $\psi$ (and project through $g_2$ onto a periodic sequence of $\Sigma_m$).
Hence, in this case, the commutative diagram \eqref{diag-1} becomes
$$
\begin{diagram}
\node{{\mathcal T}^{\mathbb Z}} \arrow{e,t}{\psi^{\mathbb Z}} \arrow{s,l}{g_2}
      \node{{\mathcal T}^{\mathbb Z}} \arrow{s,r}{g_2} \\
\node{\Sigma_m} \arrow{e,b}{\sigma}
   \node{\Sigma_m}
\end{diagram}
$$
where $\psi^{\mathbb Z}\bigl( (w_i)_{i \in {\mathbb Z}}\bigr ):= (w_{i+1})_{i \in {\mathbb Z}}\,.$

\section{Conclusion}\label{sec-5}
It is a pleasure and a honor to have the possibility to dedicate our work
to the memory of Professor Andrzej Lasota, who gave fundamental contributions both in the area of
chaotic dynamics and  in the study of mathematical models for biological systems.

\nocite{*}
\bibliographystyle{amsplain}
{\footnotesize
\bibliography{piza4}
}

\end{document}